\newtheorem{lemma}{Lemma}
\newtheorem{proposition}{Proposition}
\newtheorem{definition}{Definition}
\newtheorem{corollary}{Corollary}
\newcommand{\tr}{\,\mathrm{tr}\,}
\newcommand{\ad}{\,\mathrm{ad}\,}
\begin{document}

\begin{center}

{\Large {\bf Automorphisms of Chevalley groups \\

\bigskip

of types $B_2$ and $G_2$ over local rings\footnote{ he work is
supported by the Russian President grant MK-904.2006.1 and by the
grant of Russian Fond of Basic Research 05-01-01048.} }}

\bigskip
\bigskip

{\large \bf E.I~.~Bunina}

\end{center}
\bigskip

\begin{center}

{\bf Abstract.}

\end{center}

In the paper we prove that every automorphism of any adjoint
Chevalley group of types $B_2$ or  $G_2$ is standard, i.e., it is a
composition of the ``inner'' automorphism, ring automorphism and
central automorphism.

\bigskip

\section*{Introduction.}\leavevmode

An associative ring $R$ with a unit is called \emph{local}, if it
has exactly one maximal ideal (which coincides with the Jacobson
radical of~$R$). Equivalently, all non-invertible elements of~$R$
form an ideal. In this paper all rings under consideration are
commutative.

Let $G_{ad}$ be a Chevalley-Demazure group scheme associated with an
irreducible root system~$\Phi$ of type $B_2$ or $G_2$ (see detailed
definitions in the next section); $G_{ad}(R,\Phi)$ be a set of
points~$G$ with values in~$R$; $E_{ad}(R,\Phi)$ be the elementary
subgroup of~$G_{ad}(R,\Phi)$, where $R$ is a local commutative ring
with~$1$. In this paper we describe automorphisms of
$G_{ad}(R,\Phi)$ and $E_{ad}(R,\Phi)$ (for the root systems $A_l,
D_l $ and $E_l$ the automorphisms were described in the
paper~\cite{ravnyekorni}). Suppose that
 $R$ is a local ring with~$1/2$ and $1/3$. Then every automorphism of $G_{ad}(R,\Phi)$
($E_{ad}(R,\Phi)$) is standard (see below definitions of standard
automorphisms). These results for Chevalley groups over fields were
proved by R.\,Steinberg~\cite{Stb1} for finite case and by
J.\,E.\,Humphreys~\cite{H} for infinite case. K.\,Suzuki~\cite{Su}
studied automorphisms of Chevalley groups over rings of $p$-adic
numbers. E.\,Abe~\cite{Abe_OSN} proved this result for Noetherian
rings, but the class of all local rings is not completely contained
in the class of Noetherian rings, and the proof of~\cite{Abe_OSN}
can not be extended to the case of arbitrary local rings.

From the other side, automorphisms of classical groups over  rings
were discussed by many authors. This field of research was open by
Schreier and Van der Varden, who described automorphisms of the
group $PSL_n$ ($n\ge 3$) over arbitrary field. Then
J.\,Diedonne~\cite{D}, L. Hua and  I.\,Reiner~\cite{HR},
O'Meara~\cite{O'M2}, B.\,R.\,McDonald~\cite{Mc}, I.\,Z.\,Golubchik
and A.\,V.\,Mikhalev~\cite{GolMikh1} and others studied this problem
for groups over more general rings. To prove our theorem we
generalize some methods from the paper of
V.\,M.\,Pe\-te\-chuk~\cite{Petechuk1}.

Every Chevalley group under consideration is embedded into the group
$GL_N(R)$ for some~$N\in \mathbb N$. Therefore we can consider
Chevalley groups as matrix groups and use linear algebraic group
technique: invertible coordinate changes in local rings, uniqueness
of a solution of systems of linear equations over local rings with
the condition, that determinant of a corresponding matrix is
invertible, and so on. As the result we come to the fact that every
automorphism of Chevalley group is induced by automorphism of the
corresponding matrix ring.

The author would like to thank A.Yu.~Golubkov, A.A.~Klyachko and
A.V.~Mikhalev for valuable comments and attention to the work.

\section{Definitions and formulations of main theorems.}\leavevmode

\subsection{Root systems.}

 \begin{definition}
 \emph{A finite nonempty set $\Phi\subset \mathbb R^l$ of vectors of the space~$\mathbb R^l$ is called
 a \emph{root system}, if it generates $\mathbb R^l$, does not contain~$0$
and satisfies the following properties:}

\emph{1) $\forall \alpha\in \Phi\ (c\cdot \alpha  \in
\Phi\Leftrightarrow c=\pm 1)$;}

\emph{2) if we introduce
$$
\langle \alpha,\beta\rangle:=
\frac{2(\alpha,\beta)}{(\alpha,\alpha)} \quad
\text{(\emph{reflection coefficient})}
$$
for $\alpha,\beta\in \mathbb R^l$, then for any $\alpha,\beta\in
\Phi$ we have $\langle \alpha,\beta\rangle \in \mathbb Z$;}

\emph{3) let for $\alpha\in \mathbb R^l$, $w_\alpha$ be a reflection
under a hyperplane, orthogonal to the vector~$\alpha$, i.\,e.,
$\forall \beta \in \mathbb R^l$
$$
w_\alpha (\beta)=\beta-\langle \alpha,\beta\rangle \alpha.
$$
Then for any $\alpha,\beta\in \Phi$ we have $w_\alpha(\beta)\in
\Phi$, i.\,e., the set $\Phi$ is invariant under the action of all
reflections~$w_\alpha$, $\alpha\in \Phi$.}
\end{definition}

If $\Phi$ is a root system, then its elements are called
\emph{roots}.

\begin{definition}
\emph{The group $W$ generated by all reflections $w_\alpha$,
$\alpha\in \Phi$, is called a \emph{Weil group} of the
system~$\Phi$.}
\end{definition}

\begin{definition}
\emph{If we put in the space $\mathbb R^l$ a hyperplane, that does
not contain any roots from~$\Phi$, then all roots are divided into
two disjoint sets of \emph{positive} ($\Phi^+$) and \emph{negative}
($\Phi^-$) roots.
 A \emph{system of simple roots} is a set $\Delta= \{
\alpha_1,\dots,\alpha_l\}\subset \Phi^+$ such that any positive root
$\beta\in \Phi^+$ is uniquely written in the form
$n_1\alpha_1+\dots+n_l\alpha_l$, where $n_1,\dots,n_l\in \mathbb
Z^+$.}
\end{definition}

For every root system $\Phi$ there exists a system of simple roots.
The number $l$ is called a \emph{rank} of the system~$\Phi$.

Among all root systems we are interested in \emph{undecomposable
systems}, i.\,e., such  systems~$\Phi$ that can not be represented
as a union $\Phi=\Phi_1\cup \Phi_2$ of two disjoint sets with
mutually orthogonal roots.

\begin{definition}
\emph{By any root system one can construct the following
\emph{Dynkin diagram}. It is a graph that is constructed as follows:
its vertices correspond to the simple roots
$\alpha_1,\dots,\alpha_l$, two vertices with numbers $i$ and~$j$ are
connected, if $\langle \alpha_i,\alpha_l\rangle\ne 0$. If
$|\alpha_i|=|\alpha_j|$, then $\langle \alpha_i,\alpha_j\rangle=
\langle \alpha_j,\alpha_i\rangle$ and the number of edges between
vertices $i$ and~$j$ is equal to $|\langle
\alpha_i,\alpha_j\rangle|$. If $|\alpha_i|> |\alpha_j|$, and
$\langle \alpha_i,\alpha_j\rangle< \langle \alpha_j,\alpha_i\rangle$
and $|\langle \alpha_i,\alpha_j\rangle|=1$. In this case the
vertices $i$ and~$j$ are connected by $|\langle
\alpha_j,\alpha_i\rangle|$ edges and an arrow comes from a long root
to a short one.}
\end{definition}

By a Dynkin diagram we can uniquely define a root system.

All undecomposable root systems up to an isomorphism are divided
into $4$ infinite (\emph{classical}) series $A_l$ ($l\ge 1$), $B_l$
($l\ge 2$), $C_l$ ($l\ge 3$) and $D_l$ ($l\ge 4$) and $5$ separate
(\emph{exceptional}) cases $E_6$, $E_7$, $E_8$, $F_4$ and~$G_2$.

In this paper we are interested in the root systems $B_2$ and $G_2$,
with Dynkin diagrams

\bigskip

\begin{picture}(250,40)
\put(5,20){$B_2:$} \put(70,30){\circle*{4}}
\put(110,30){\circle*{4}} \put(68,38){$1$} \put(108,38){$2$}
\qbezier(70,30)(90,39)(110,30) \qbezier(70,30)(90,21)(110,30)
\put(110,30){\line(-2,1){10}} \put(110,30){\line(-2,-1){10}}
\put(110,30){\line(-6,1){11}} \put(110,30){\line(-5,-1){11}}
\end{picture}

\begin{picture}(250,40)
\put(5,25){$G_2:$} \put(70,30){\circle*{4}}
\put(110,30){\circle*{4}} \put(70,30){\line(1,0){40}}
\put(68,38){$1$} \put(108,38){$2$} \qbezier(70,30)(90,39)(110,30)
\qbezier(70,30)(90,21)(110,30) \put(110,30){\line(-2,1){10}}
\put(110,30){\line(-2,-1){10}} \put(110,30){\line(-6,1){11}}
\put(110,30){\line(-5,-1){11}}
\end{picture}

Here we fix a root system~$\Phi$, of type $B_2$ or $G_2$, with
system of simple roots~$\Delta(B_2)=\{ \alpha_1=e_1-e_2,
\alpha_2=e_2\}$, or $\Delta(G_2)=\{ \alpha_1= e_1-e_2,
\alpha_2=-2e_1+e_2+e_3\}$, positive roots $\Phi^+(B_2)=\{ \alpha_1,
\alpha_2, \alpha_1+\alpha_2=e_1, \alpha_1+2\alpha_2=e_1+e_2\}$, or
$\Phi^+(G_2)=\{ \alpha_1, \alpha_2, \alpha_1+\alpha_2= e_3-e_1,
2\alpha_1+\alpha_2=e_3-e_2, 3\alpha_1+\alpha_2=e_1+e_3-2e_2,
3\alpha_1+2\alpha_2=2e_3-e_1-e_2\} $, Weil group~$W$. Recall that in
our case in the root system there are roots of two lengths, all
roots of the same length are conjugate by the action of the Weil
group.  More details about root systems and their properties can be
found in the books \cite{Hamfris}, \cite{Burbaki}.

\subsection{Semisimple Lie algebras.}

More details about semisimple Lie algebras can be found in the
book~\cite{Hamfris}.

\begin{definition}
\emph{\emph{Lie algebra} $\mathcal L$ over a field~$K$ is a linear
space over~$K$, with an operation of multiplication $x,y\mapsto
[x,y],$ linear by both variables and satisfying the following
conditions:}

\emph{1) \emph{anticommutativity}:
$$
\forall x,y\in {\mathcal L}\ [x,y]=-[y,x];
$$}

\emph{2) \emph{Jacobi identity}:
$$
\forall x,y,z\in {\mathcal L}\ [x,[y,z]]+[y,[z,x]]+[z,[x,y]]=0.
$$}
\end{definition}

Dimension of Lie algebra is defined by its dimension as a linear
space over~$K$. So Lie algebra is called \emph{finite dimensional},
if the space $\mathcal L$ is finite dimensional.

\begin{definition}
\emph{A subspace  ${\mathcal L}'$ of a Lie algebra~$\mathcal L$ as a
linear space is called  a \emph{subalgebra} of~$\mathcal L$, if
$\forall x,y\in {\mathcal L}'$ $[x,y]\in {\mathcal L}'$. A
subalgebra ${\mathcal L}'$ of~$\mathcal L$ is called its
\emph{ideal}, if $\forall x\in {\mathcal L}'$ $\forall y\in
{\mathcal L}$\ $[x,y]\in {\mathcal L}'$.}  \emph{A \emph{commutant}
of Lie algebras ${\mathcal L}_1$ and ${\mathcal L}_2$, lying in one
Lie algebra~$\mathcal L$, is its subalgebra ${\mathcal
L}'=[{\mathcal L}_1,{\mathcal L}_2],$ generated by all $[x,y]$ for
$x\in {\mathcal L}_1$, $y\in {\mathcal L}_2$.}
\end{definition}

\begin{definition}
\emph{A sequence
$$
{\mathcal L}^0={\mathcal L}, {\mathcal L}^1=[{\mathcal L},{\mathcal
L}^0], {\mathcal L}^2=[ {\mathcal L},{\mathcal L}^1],\dots,
{\mathcal L}^{n+1}=[{\mathcal L},{\mathcal L}^n],\dots
$$
is called a \emph{central series} of a Lie algebra~${\mathcal L}$.
If for some $n\in \mathbb N$ we have ${\mathcal L}^n=0$, then a Lie
algebra~$\mathcal L$ is called \emph{nilpotent}.}
\end{definition}

\begin{definition}
\emph{A sequence
$$
{\mathcal L}^{(0)}={\mathcal L}, {\mathcal L}^{(1)}=[{\mathcal
L}^{(0)},{\mathcal L}^{(0)}], {\mathcal L}^{(2)}=[{\mathcal
L}^{(1)},{\mathcal L}^{(1)}],\dots, {\mathcal L}^{(n+1)}=[{\mathcal
L}^{(n)},{\mathcal L}^{(n)}],\dots
$$
is called a \emph{derivative series} of a Lie algebra~$\mathcal L$.
If for some $n\in \mathbb N$ we have ${\mathcal L}^{(n)}=0$, then
$\mathcal L$ is called \emph{solvable}.}
\end{definition}

\begin{definition}
\emph{Consider a finitely dimensional Lie algebra~$\mathcal L$. The
greatest solvable ideal of Lie algebra, that is a sum of all its
solvable ideals, is called a \emph{radical} of Lie algebra. A Lie
algebra with zero radical is called \emph{semisimple}. A
noncommutative Lie algebra $\mathcal L$ is called \emph{simple}, if
its has exactly two ideals: $0$ and $\mathcal L$.}
\end{definition}

Finitely dimensional semisimple Lie algebra over $\mathbb C$ is a
direct sum of simple Lie algebras.

\begin{definition}
\emph{A \emph{normalizer} of a subalgebra ${\mathcal L}'$ in an
algebra~$\mathcal L$ is a subalgebra
$$
N_{\mathcal L}({\mathcal L}'):=\{ x\in {\mathcal L}\,|\, \forall
y\in {\mathcal L}' \ [x,y]\in {\mathcal L}'\}.
$$}
\end{definition}

\begin{definition}
\emph{A \emph{Cartan subalgebra} of a Lie algebra~${\mathcal L}$ is
its nilpotent self-normalizing subalgebra~$\mathcal H$. For a
semisimple Lie algebra it is Abelian and is defined up to an
automorphism of the algebra.}
\end{definition}

\begin{proposition}\emph{(\cite{Hamfris}, \S\,8)}
Let $\mathcal L$ be a semisimple finitely dimensional Lie algebra
over~$ \mathbb C$, $\mathcal H$ its Cartan subalgebra. Consider the
space~${\mathcal H}^*$. Let for $\alpha\in {\mathcal H}^*$
$$
{\mathcal L}_\alpha:=\{ x\in {\mathcal L}\mid [h,x]=\alpha(h)x\text{
for any } h\in {\mathcal H}\}.
$$

In this case  ${\mathcal L}_0={\mathcal H}$ and the algebra
$\mathcal L$ allows a decomposition ${\mathcal L}={\mathcal H}
\oplus \sum\limits_{\alpha\ne 0} {\mathcal L}_\alpha$, and if
${\mathcal L}_\alpha\ne 0$, then $\dim {\mathcal L}_\alpha=1$, all
such nonzero $\alpha\in {\mathcal H}$ that ${\mathcal L}_\alpha\ne
0$, form some root system~$\Phi$. A root system $\Phi$ and a
semisimple Lie algebra $\mathcal L$ over~$\mathbb C$ uniquely define
each other.
\end{proposition}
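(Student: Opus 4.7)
The plan is to prove the proposition in the order the assertions are listed, using the standard machinery of the abstract Jordan decomposition and representation theory of $\mathfrak{sl}_2$. First I would show that every $\ad h$ for $h\in \mathcal H$ acts semisimply on $\mathcal L$. Since $\mathcal L$ is semisimple, its Killing form $\kappa$ is non-degenerate (Cartan's criterion), so the abstract Jordan decomposition exists in $\mathcal L$ and each $h \in \mathcal H$ decomposes as $h = h_s + h_n$ with both summands in $\mathcal H$ (using the self-normalizing condition, one checks $h_s, h_n$ centralize $\mathcal H$ and hence lie in $\mathcal H$). A standard engulfing argument, together with nilpotency of $\mathcal H$, forces $h_n = 0$, so $\ad h$ is semisimple. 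Because $\mathcal H$ is abelian, the commuting family $\{\ad h\mid h\in \mathcal H\}$ is simultaneously diagonalizable, yielding the weight-space decomposition $\mathcal L = \bigoplus_{\alpha\in \mathcal H^*}\mathcal L_\alpha$. That $\mathcal L_0 = \mathcal H$ follows from the self-normalizing property: $\mathcal L_0$ is the centralizer of $\mathcal H$, and this centralizer is shown to equal $\mathcal H$ by combining the Jordan decomposition above with non-degeneracy of $\kappa|_{\mathcal H}$.

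Next I would record the bracket rule $[\mathcal L_\alpha, \mathcal L_\beta]\subseteq \mathcal L_{\alpha+\beta}$ (immediate from the Jacobi identity) and the orthogonality relation $\kappa(\mathcal L_\alpha,\mathcal L_\beta)=0$ whenever $\alpha+\beta \neq 0$ (apply the invariance $\kappa([h,x],y)+\kappa(x,[h,y])=0$). Non-degeneracy of $\kappa$ then forces each $\mathcal L_\alpha$ (for $\alpha\ne 0$ a root) to pair non-trivially with $\mathcal L_{-\alpha}$; in particular both spaces are non-zero simultaneously, and via the isomorphism $\mathcal H \cong \mathcal H^*$ induced by $\kappa$ each root $\alpha$ corresponds to a vector $t_\alpha\in \mathcal H$.

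The heart of the argument, and where I expect the main obstacle, is the proof that $\dim \mathcal L_\alpha = 1$ and that the set $\Phi$ of non-zero weights satisfies the root-system axioms of the first subsection. Here I would pick nonzero $x_\alpha\in \mathcal L_\alpha$, $y_\alpha\in \mathcal L_{-\alpha}$ normalized so that $\{x_\alpha, h_\alpha, y_\alpha\}$ (with $h_\alpha = [x_\alpha,y_\alpha]$ suitably rescaled) spans a copy of $\mathfrak{sl}_2(\mathbb C)$ inside $\mathcal L$, and then view $\mathcal L$ as an $\mathfrak{sl}_2$-module. The integrality of weights of finite-dimensional $\mathfrak{sl}_2$-representations gives $\langle \alpha,\beta\rangle\in \mathbb Z$; the action of the group generated by $\exp(\ad x_\alpha)\exp(-\ad y_\alpha)\exp(\ad x_\alpha)$ realises the reflection $w_\alpha$ and shows $w_\alpha(\Phi)\subseteq \Phi$; the module $\mathcal H\oplus \bigoplus_{c\in\mathbb C^*}\mathcal L_{c\alpha}$ has to be a direct sum of $\mathfrak{sl}_2$-submodules, and a weight-count shows the only multiples of $\alpha$ that occur are $\pm\alpha$ and that each $\mathcal L_{\pm\alpha}$ is one-dimensional. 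These are exactly the three axioms for a root system.

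Finally, for the uniqueness part I would invoke Serre's theorem: fix a base $\Delta\subset \Phi$ and show that $\mathcal L$ is generated by $3l$ elements $e_i\in \mathcal L_{\alpha_i}$, $f_i\in \mathcal L_{-\alpha_i}$, $h_i=[e_i,f_i]$ subject only to the relations $[h_i,h_j]=0$, $[e_i,f_j]=\delta_{ij}h_i$, $[h_i,e_j]=\langle \alpha_j,\alpha_i\rangle e_j$, $[h_i,f_j]=-\langle \alpha_j,\alpha_i\rangle f_j$, and the quantum Serre relations $(\ad e_i)^{1-\langle \alpha_j,\alpha_i\rangle}e_j = 0$, $(\ad f_i)^{1-\langle \alpha_j,\alpha_i\rangle}f_j=0$. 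Since the relations depend only on the Cartan integers of $\Phi$, the algebra they present is determined by $\Phi$ alone; conversely the Cartan integers, and hence $\Phi$, can be recovered from the decomposition above. Thus $\mathcal L$ and $\Phi$ determine each other. The technical obstacle throughout is verifying the semisimplicity of $\ad h$ on $\mathcal H$ and dimension one statement for $\mathcal L_\alpha$, which requires the careful combination of Cartan's criterion, the self-normalizing assumption, and the $\mathfrak{sl}_2$-module structure.
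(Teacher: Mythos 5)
The paper gives no proof of this proposition at all; it is quoted as a known result with a citation to Humphreys' book (\S\,8), and your sketch is precisely the standard argument from that source: semisimplicity of $\ad h$ via the abstract Jordan decomposition and the self-normalizing/nilpotency hypotheses, the simultaneous diagonalization giving the weight-space decomposition with $\mathcal L_0=\mathcal H$, the $\mathfrak{sl}_2$-module analysis for $\dim\mathcal L_\alpha=1$ and the root-system axioms, and Serre's theorem for the correspondence between $\Phi$ and $\mathcal L$. The outline is correct; the only slips are cosmetic --- the relations you call ``quantum Serre relations'' are just the classical Serre relations, and the claim that $\mathcal L$ determines $\Phi$ tacitly uses the conjugacy of Cartan subalgebras, which deserves an explicit mention.
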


On a Lie algebra $\mathcal L$ one can introduce a bilinear
\emph{Killing form}
$$
\varkappa(x,y)=\tr (\ad x\ad y),
$$
where $\ad x\in GL({\mathcal L})$, $\ad x: z\mapsto [x,z]$ is an
\emph{adjoint representation}. For a semisimple Lie algebra a
restriction of the Killing form on~$\mathcal H$ is non-degenerate,
so we can identify the spaces $\mathcal H$ and ${\mathcal H}^*$.

\begin{proposition}\emph{(\cite{Steinberg}, p.\,10)}
There exists a basis $\{ h_1, \dots, h_l\}$ in~$\mathcal H$ and for
every $\alpha\in \Phi$ elements $x_\alpha \in {\mathcal L}_\alpha$
so that

\emph{1)} $\{ h_i; x_\alpha\}$ is a basis in~$\mathcal L$;

\emph{2)} $[h_i,h_j]=0$;

\emph{3)} $[h_i,x_\alpha]=\langle \alpha,\alpha_i\rangle x_\alpha$;

\emph{4)} $[x_\alpha,x_{-\alpha}]=h_\alpha=$ is an integral linear
combination of~$h_i$;

\emph{5)} $[x_\alpha,x_\beta]=N_{\alpha \beta} x_{\alpha+\beta}$,
если $\alpha+\beta\in \Phi$ ($N_{\alpha\beta}\in \mathbb Z$);

\emph{6)} $[x_\alpha,x_\beta]=0$, if $\alpha+\beta\ne 0$,
$\alpha+\beta\notin \Phi$.
\end{proposition}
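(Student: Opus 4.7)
The plan is to build the basis in three stages: first fix the Cartan part $\{h_i\}$, then choose any nonzero vectors $y_\alpha$ spanning the one-dimensional root spaces $\mathcal{L}_\alpha$, and finally rescale and sign-adjust the $y_\alpha$ so that all structure constants become integers. The root space decomposition $\mathcal{L} = \mathcal{H} \oplus \bigoplus_{\alpha \in \Phi} \mathcal{L}_\alpha$ and $\dim \mathcal{L}_\alpha = 1$ from Proposition~1 will be the starting point.

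For the Cartan part, I would use the Killing form $\varkappa$, which is nondegenerate on $\mathcal{H}$, to identify $\mathcal{H}^*$ with $\mathcal{H}$. Each simple root $\alpha_i$ corresponds to an element $t_{\alpha_i}\in\mathcal{H}$; defining $h_i := 2 t_{\alpha_i}/(\alpha_i,\alpha_i)$ (the ``coroots'') gives a basis of $\mathcal{H}$. Property~2 is automatic since $\mathcal{H}$ is abelian. For property~3, any $x\in\mathcal{L}_\alpha$ satisfies $[h_i,x]=\alpha(h_i)x$, and the definition of $h_i$ yields $\alpha(h_i)=\langle\alpha,\alpha_i\rangle$. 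For each $\alpha$, pick any $y_\alpha\in\mathcal{L}_\alpha\setminus\{0\}$. Standard $\mathfrak{sl}_2$-theory on the subalgebra $\langle y_\alpha,y_{-\alpha},[y_\alpha,y_{-\alpha}]\rangle$ shows that $[y_\alpha,y_{-\alpha}]$ is a nonzero multiple of the coroot $h_\alpha$, which is an integral combination of the $h_i$ because $\alpha$ is an integral combination of the $\alpha_i$. Rescaling $y_\alpha$ (leaving $y_{-\alpha}$ to compensate) achieves property~4: $[x_\alpha,x_{-\alpha}]=h_\alpha$. Property~6 follows immediately from grading, since $[\mathcal{L}_\alpha,\mathcal{L}_\beta]\subseteq\mathcal{L}_{\alpha+\beta}$ and the right-hand side is zero when $\alpha+\beta\notin\Phi\cup\{0\}$. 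The same grading argument reduces property~5 to the claim that the scalars $N_{\alpha\beta}$ defined by $[x_\alpha,x_\beta]=N_{\alpha\beta}x_{\alpha+\beta}$ can be arranged to lie in $\mathbb{Z}$.

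The main obstacle, and the content of Chevalley's theorem, is integrality (and sign-consistency) of the $N_{\alpha\beta}$. The strategy is inductive along a height function on $\Phi^+$: fix $x_{\alpha_i}$ for each simple root $\alpha_i$ arbitrarily (with the normalization above), then for a non-simple positive root $\gamma=\alpha+\beta$ use $[x_\alpha,x_\beta]=N_{\alpha\beta}x_{\alpha+\beta}$ to \emph{define} $x_\gamma$ up to a sign choice. The Jacobi identity on triples $(x_\alpha,x_\beta,x_\gamma)$ yields the classical relation
\[
 N_{\alpha\beta}N_{\alpha+\beta,\gamma}+N_{\beta\gamma}N_{\beta+\gamma,\alpha}+N_{\gamma\alpha}N_{\gamma+\alpha,\beta}=0,
\]
and analysing the $\alpha$-string through $\beta$, namely $\{\beta-p\alpha,\ldots,\beta+q\alpha\}\subset\Phi$, gives $|N_{\alpha\beta}|^2=q(p+1)$, an integer determined combinatorially by the root system. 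One then checks by induction on height that a global choice of signs can be made consistent with all Jacobi relations, yielding $N_{\alpha\beta}\in\mathbb{Z}$ for every pair of roots.

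For the specific systems $B_2$ and $G_2$ of interest, the obstacle is in fact mild: the root sets have only $8$ and $12$ elements respectively, so the induction terminates after finitely many explicit steps, and the integrality of the $N_{\alpha\beta}$ (with values $\pm 1,\pm 2,\pm 3$ in $G_2$) can be verified by direct case analysis on positive roots of increasing height. This is exactly the route followed in Steinberg's lectures, and I would simply adopt that argument rather than reprove the general integrality statement here.
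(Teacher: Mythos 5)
First, note that the paper does not actually prove this proposition: it is quoted from Steinberg's lectures (p.~10) as background, so there is no internal proof to compare against. Your outline is the standard existence proof of a Chevalley basis --- root space decomposition, coroots $h_i = 2t_{\alpha_i}/(\alpha_i,\alpha_i)$ via the Killing form, normalization $[x_\alpha,x_{-\alpha}]=h_\alpha$ using $\mathfrak{sl}_2$-theory, the grading argument for 5) and 6), and then the integrality and sign analysis of the $N_{\alpha\beta}$ --- and this is precisely the route of the cited source, so the overall plan is the right one.

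There is, however, one concrete error at the crux. The formula $|N_{\alpha\beta}|^2=q(p+1)$ is not correct. What $\mathfrak{sl}_2$-theory on the $\alpha$-string $\beta-p\alpha,\dots,\beta+q\alpha$ actually gives is $[x_{-\alpha},[x_\alpha,x_\beta]]=q(p+1)\,x_\beta$, i.e.\ the relation $N_{\alpha\beta}\,N_{-\alpha,\alpha+\beta}=q(p+1)$ between two \emph{different} structure constants; it does not compute $|N_{\alpha\beta}|^2$. Indeed, in $B_2$ with $\alpha=e_2$, $\beta=e_1-e_2$ one has $p=0$, $q=2$, so $q(p+1)=2$, while the true constant is $\pm 1$; and in any case knowing that $|N_{\alpha\beta}|^2$ is an integer would not yield $N_{\alpha\beta}\in\mathbb Z$. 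The actual content of Chevalley's theorem is that after a choice of the $x_\alpha$ compatible with the involution $x_\alpha\mapsto -x_{-\alpha}$ one gets $N_{\alpha\beta}=\pm(p+1)$, where $p$ is the largest integer with $\beta-p\alpha\in\Phi$, and the consistency of the sign choices across all Jacobi relations is exactly the step that requires the argument of the cited source. Since you explicitly defer to Steinberg for that step, the proposal is acceptable as a sketch, but the displayed formula should be replaced by $N_{\alpha\beta}=\pm(p+1)$.
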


\begin{definition}
\emph{A \emph{representation} of Lie algebra~$\mathcal L$ in a
linear space~$V$ is a linear mapping $\pi: {\mathcal L}\to gl(V)$,
with
$$
\forall x,y\in {\mathcal L}\ \pi ([x,y])=\pi(x)\pi(y)-\pi(y)\pi(x).
$$
A representation is called \emph{faithful}, of it has zero kernel.}
\end{definition}

\subsection{Elementary Chevalley groups.}

Introduce now elementary Chevalley groups (see~\cite{Steinberg}).

Let  $\mathcal L$ be a semisimple Lie algebra (over~$\mathbb C$)
with the root system~$\Phi$, $\pi: {\mathcal L}\to gl(V)$ be its
finitely dimensional faithful representation (of dimension~$n$).
Then we can choose a basis in the space~$V$ (for example, we can
take a basis of weight vectors) so that all operators
$\pi(x_\alpha)^k/k!$ for $k\in \mathbb N$ are written as integral
(nilpotent) matrices. An integral matrix can be also considered as a
matrix over an arbitrary commutative ring with a unit. Let $R$ be  a
ring of this type. Consider the $n\times n$ matrices over~$R$, the
matrices $\pi(x_\alpha)^k/k!$ for
 $\alpha\in \Phi$, $k\in \mathbb N$ are included in $M_n(R)$.

Now consider automorphisms of the free module $R^n$ of the form
$$
\exp (tx_\alpha)=x_\alpha(t)=1+t\pi(x_\alpha)+t^2
\pi(x_\alpha)^2/2+\dots+ t^k \pi(x_\alpha)^k/k!+\dots
$$
Since all matrices $\pi(x_\alpha)$ are nilpotent, we have that this
series is finite.

\begin{definition}
\emph{The subgroup of $Aut(R^n)$, generated by all automorphisms
$x_\alpha(t)$, $\alpha\in \Phi$, $t\in R$, is called an
\emph{elementary Chevalley group} (notation:
 $E_\pi(\Phi,R)$).}
\end{definition}

\begin{definition}
\emph{In elementary Chevalley group we can introduce the following
important elements and subgroups:}

--- $w_\alpha(t)=x_\alpha(t) x_{-\alpha}(-t^{-1})x_\alpha(t)$, $\alpha\in \Phi$,
$t\in R^*$;

--- $h_\alpha (t) = w_\alpha(t) w_\alpha(1)^{-1}$;

\emph{--- the subgroup $N$ is generated by all
 $w_\alpha (t)$, $\alpha \in \Phi$, $t\in R^*$;}

\emph{--- the subgroup $H$ is generated by all
 $h_\alpha(t)$, $\alpha \in \Phi$, $t\in R^*$.}
\end{definition}

It is known that the group $N$ is a normalizer of~$H$ in elementary
Chevalley group, the quotient group $N/H$ is isomorphic to the Weil
group $W(\Phi)$.

Elementary Chevalley groups are defined not by representation of the
Chevalley groups, but just by its \emph{weight lattice}:

\begin{definition}
\emph{If $V$ is the representation space of the semisimple Lie
algebra~$\mathcal L$ (with the Cartan subalgebra~$\mathcal H$), then
a functional
 $\lambda \in {\mathcal H}^*$ is called
the \emph{weight} of its representation, if there exists a nonzero
vector $v\in V$  (which is called the \emph{weight vector}) such
that for every $h\in {\mathcal H}$ $\pi(h) v=\lambda (h)v.$ All
weights of a given representation (by addition) generate a lattice
(free Abelian group, where every  $\mathbb Z$-basis  is also a
$\mathbb C$-basis in~${\mathcal H}^*$), that is called the
\emph{weight lattice} $\Lambda_\pi$.}
\end{definition}

An elementary Chevalley group is completely defined by a root
system~$\Phi$, commutative ring~$R$ with a unit and a weight lattice
$\Lambda_\pi$.

Among all lattices we can mark two: the lattice corresponding to the
adjoint representation, it is generated by all roots (the
\emph{adjoint lattice}~$\Lambda_{ad}$) and the lattice generated by
all weights of all representations (the \emph{universal
lattice}~$\Lambda_{sc}$). For every faithful representation~$\pi$ we
have the inclusion $\Lambda_{ad}\subseteq \Lambda_\pi \subseteq
\Lambda_{sc}.$ Respectively, we have the \emph{adjoint} and
\emph{universal} elementary Chevalley groups.

\begin{proposition}\emph{(\cite{Steinberg}, p.\,32)}
Every elementary Chevalley group satisfies the following
conditions:

(R1) $\forall \alpha\in \Phi$ $\forall t,u\in R$\quad
$x_\alpha(t)x_\alpha(u)= x_\alpha(t+u)$;

(R2) $\forall \alpha,\beta\in \Phi$ $\forall t,u\in R$\quad
 $\alpha+\beta\ne 0\Rightarrow$
$$
[x_\alpha(t),x_\beta(u)]=x_\alpha(t)x_\beta(u)x_\alpha(-t)x_\beta(-u)=
\prod x_{i\alpha+j\beta} (c_{ij}t^iu^j),
$$
where $i,j$ are integers, multiplication is taken by all roots
$i\alpha+j\beta$, permuted in some fixed order; $c_{ij}$ are integer
numbers not depending of $t$ and~$u$;

(R3) $\forall \alpha \in \Phi$ $w_\alpha=w_\alpha(1)$;

(R4) $\forall \alpha,\beta \in \Phi$ $\forall t\in R^*$ $w_\alpha
h_\beta(t)w_\alpha^{-1}=h_{w_\alpha (\beta)}(t)$;

(R5) $\forall \alpha,\beta\in \Phi$ $\forall t\in R^*$ $w_\alpha
x_\beta(t)w_\alpha^{-1}=x_{w_\alpha(\beta)} (ct)$, where
$c=c(\alpha,\beta)= \pm 1$;

(R6) $\forall \alpha,\beta\in \Phi$ $\forall t\in R^*$ $\forall u\in
R$ $h_\alpha (t)x_\beta(u)h_\alpha(t)^{-1}=x_\beta(t^{\langle
\beta,\alpha \rangle} u)$.
\end{proposition}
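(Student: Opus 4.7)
The plan is to verify (R1)--(R6) first in the elementary Chevalley group over $\mathbb C$, where every $x_\alpha(t)$ is literally the matrix exponential $\exp(t\pi(x_\alpha))$, and then transfer the identities to an arbitrary commutative ring $R$ by observing that each relation is a polynomial identity in the parameters with integer coefficients; an identity valid over $\mathbb Z$ is automatically valid over $R$. This reduction is exactly what the careful choice of the Chevalley basis $\{h_i;x_\alpha\}$ from the preceding proposition provides, since the matrices $\pi(x_\alpha)^k/k!$ are then integral.

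For (R1), the operator $\pi(x_\alpha)$ commutes with itself, so
$$
x_\alpha(t)x_\alpha(u)=\exp(t\pi(x_\alpha))\exp(u\pi(x_\alpha))=\exp((t+u)\pi(x_\alpha))=x_\alpha(t+u),
$$
and this identity has integer coefficients. Relation (R3) is definitional. For (R2), $t\pi(x_\alpha)$ and $u\pi(x_\beta)$ are nilpotent, so the commutator $[x_\alpha(t),x_\beta(u)]$ is a finite product of matrix exponentials; a Baker--Campbell--Hausdorff computation, combined with $[x_\alpha,x_\beta]=N_{\alpha\beta}x_{\alpha+\beta}$ when $\alpha+\beta\in\Phi$, shows that after fixing an order on the roots of the form $i\alpha+j\beta$ the commutator rewrites as $\prod x_{i\alpha+j\beta}(c_{ij}t^iu^j)$. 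The decisive and most delicate point here is that the $c_{ij}$ are integers, i.e.\ Chevalley's integrality theorem, proved by exhibiting a $\mathbb Z$-form of $\mathcal L$ preserved by all $\pi(x_\alpha)^k/k!$.

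Relations (R4)--(R6) are computations localized in the rank-one subgroup generated by $x_{\pm\alpha}(R)$ together with its action on the remaining root subgroups. For (R5), the adjoint action of $w_\alpha$ permutes root spaces according to the Weyl reflection, so $\mathrm{Ad}(w_\alpha)(x_\beta)\in\mathcal L_{w_\alpha(\beta)}$; since that space is one-dimensional, $\mathrm{Ad}(w_\alpha)(x_\beta)=c(\alpha,\beta)x_{w_\alpha(\beta)}$ for some scalar, and an explicit $SL_2$-computation forces $c(\alpha,\beta)=\pm1$. Exponentiating and invoking integrality yields (R5) over $R$. For (R6), the relation $[h_\alpha,x_\beta]=\langle\beta,\alpha\rangle x_\beta$ says that on the one-dimensional complex root space $\mathcal L_\beta$ the operator $h_\alpha(t)=\exp((\log t)h_\alpha)$ acts as multiplication by $t^{\langle\beta,\alpha\rangle}$, hence $h_\alpha(t)x_\beta(u)h_\alpha(t)^{-1}=x_\beta(t^{\langle\beta,\alpha\rangle}u)$; the exponent is an integer, so the identity transfers to $R$. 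Finally, (R4) follows from (R5) applied to $h_\beta(t)=w_\beta(t)w_\beta(1)^{-1}$ together with the multiplicativity $h_\gamma(s)h_\gamma(s')=h_\gamma(ss')$: the two $\pm1$ signs produced by (R5) cancel in the quotient.

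The main obstacle throughout is the integrality of the constants $c_{ij}$ in (R2); everything else is essentially formal manipulation in a nilpotent matrix algebra combined with the rank-one picture. Granting integrality, the whole proposition is a book-keeping exercise, but it is precisely this integrality that makes the construction of Chevalley groups over an arbitrary commutative ring $R$ possible in the first place.
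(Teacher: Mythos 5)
The paper gives no proof of this proposition; it is quoted verbatim from Steinberg's \emph{Lectures on Chevalley groups} (p.~32), and your sketch is precisely the argument found there: verify the relations over $\mathbb C$ using the integrality of the Chevalley basis and of the structure constants, then transfer them to an arbitrary commutative ring because each relation is a polynomial identity with integer coefficients. Your treatment of (R1)--(R6), including the identification of the integrality of the $c_{ij}$ in (R2) as the essential point and the derivation of (R4) from (R5) via multiplicativity of $h_\gamma$, is correct and matches the cited source.
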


By $X_\alpha$ we denote the subgroup generated by all $x_\alpha (t)$
for $t\in R$.

\subsection{Chevalley groups.}

Introduce now Chevalley groups (see~\cite{Steinberg},
\cite{Chevalley}, \cite{Artem_dis}, \cite{VavPlotk1}, and the later
references in these papers).

\begin{definition}\emph{A subset $X\subseteq \mathbb C^n$ is called an \emph{affine variety},
if $X$ is a set of common zeros in $\mathbb C^n$ of a finite system
of polynomials from $\mathbb C[x_1,\dots,x_n]$.}
\end{definition}

\begin{definition}
\emph{Topology of an affine $n$-space, where a system of closed sets
coincides with the system of affine varieties, is called a
\emph{Zarisski topology}. A variety is called \emph{irreducible}, if
it can not be represented as a union of two proper nonempty closed
subsets.}
\end{definition}

\begin{definition}
\emph{Let $G$ be an affine variety with a group structure. If both
mappings
\begin{align*}
m:& G\times G\to G,& m(x,y)&=xy,\\
i:& G\to G,& i(x)&=x^{-1}
\end{align*}
are expressed as polynomials of coordinates, then $G$ is called an
\emph{algebraic group}. A \emph{linear algebraic group} is an
arbitrary algebraic subgroup in $M_n(\mathbb C)$ (with matrix
multiplication).}

\emph{An algebraic group is called \emph{connected}, if it is
irreducible as a variety.}

\emph{Every algebraic group $G$ contains the unique greatest
connected solvable normal subgroup: a \emph{radical~$R(G)$}. A
connected algebraic group with trivial radical is called
\emph{semisimple}.}
\end{definition}

Consider semisimple linear algebraic groups over algebraically
closed fields. These are precisely elementary Chevalley groups
$E_\pi(\Phi,K)$ (see~\cite{Steinberg},~\S\,5).

All these groups are defined in $SL_n(K)$ as  common set of zeros of
polynomials of matrix entries $a_{ij}$ with integer coefficients
 (for example,
in the case of the root system $C_l$ and the universal
representation we have $n=2l$ and the polynomials from the condition
$(a_{ij})Q(a_{ji})-Q=0$). It is clear now that multiplication and
inversion are also defined by polynomials with integer coefficients.
Therefore, these polynomials can be considered as polynomial over
arbitrary commutative ring with a unit. Let some elementary
Chevalley group $E$ over~$\mathbb C$ be defined in $SL_n(\mathbb C)$
by polynomials $p_1(a_{ij}),\dots, p_m(a_{ij})$. For a commutative
ring~$R$ with a unit let us consider the group
$$
G(R)=\{ (a_{ij}\in M_n(R)| \widetilde p_1(a_{ij})=0,\dots
,\widetilde p_m(a_{ij})=0\},
$$
where  $\widetilde p_1(\dots),\dots \widetilde p_m(\dots)$ are
polynomials having the same coefficients as
$p_1(\dots),\dots,p_m(\dots)$, but considered over~$R$.

\begin{definition}
\emph{A group described above is called a \emph{Chevalley group}
$G_\pi(\Phi,R)$ of type~$\Phi$ over ring~$R$, for every
algebraically closed field~$K$ it coincides with elementary
Chevalley group.}
\end{definition}

\begin{definition}
\emph{The subgroup of diagonal (in the standard basis of weight
vectors) matrices of the Chevalley group $G_\pi(\Phi,R)$ is called
the  \emph{standard maximal torus} of $G_\pi(\Phi,R)$ and it is
denoted by $T_\pi(\Phi,R)$. This group is isomorphic to
$Hom(\Lambda_\pi, R^*)$.}
\end{definition}

Let us denote by $h(\chi)$ the elements of the torus $T_\pi
(\Phi,R)$, corresponding to the homomorphism $\chi\in Hom
(\Lambda(\pi),R^*)$.

In particular, $h_\alpha(u)=h(\chi_{\alpha,u})$ ($u\in R^*$, $\alpha
\in \Phi$), where
$$
\chi_{\alpha,u}: \lambda\mapsto u^{\langle
\lambda,\alpha\rangle}\quad (\lambda\in \Lambda_\pi).
$$

Note that the condition
$$
G_\pi (\Phi,R)=E_\pi (\Phi,R)
$$
is not true even for fields, that are not algebraically closed. But
if  $G$ is a universal group and the ring $R$ is \emph{semilocal}
(i.e. it contains only finite number of maximal ideals), then we
have the condition $G_{sc}(\Phi,R)=E_{sc}(\Phi,R).$ \cite{M},
\cite{Abe1}, \cite{St3}, \cite{AS}.

Let us show the difference between Chevalley groups and their
elementary subgroups in the case when a ring~$R$ is semilocal and a
Chevalley group is not universal. In this case $G_\pi
(\Phi,R)=E_\pi(\Phi,R)T_\pi(\Phi,R)$ (see~\cite{M}, \cite{Abe1},
\cite{AS}), and the elements $h(\chi)$ are connected with elementary
generators by the formula
\begin{equation}\label{ee4}
h(\chi)x_\beta (\xi)h(\chi)^{-1}=x_\beta (\chi(\beta)\xi).
\end{equation}

It is known that the group of elementary matrices
$E_2(R)=E_{sc}(A_1,R)$ is not necessary normal in the special linear
group $SL_2(R)=G_{sc}(A_1,R)$ (see~\cite{Cn}, \cite{Sw},
\cite{Su1}).

But if $\Phi$ is an irreducible root system of the rank $l\ge 2$,
then $E(\Phi,R)$ is always normal in $G(\Phi,R)$. In the case of
semilocal rings from the formula~\eqref{ee4} we see that
$$
[G(\Phi,R),G(\Phi,R)]\subseteq E(\Phi,R).
$$
If the ring $R$ also contains~$1/2$, then it is easy to show that
$$
[G(\Phi,R),G(\Phi,R)]=[E(\Phi,R), E(\Phi,R)]=E(\Phi,R).
$$

\subsection{Definitions of standard automorphisms and formulations of main theorems.}

\begin{definition}
\emph{Let us define three types of automorphisms of the Chevalley
group
 $G_\pi(\Phi,R)$, that
are call \emph{standard}.}

 \emph{{\bf Central automorphisms.} Let $C_G(R)$ be the center of
$G_\pi(\Phi,R)$ and $\tau: G_\pi(\Phi,R) \to C_G(R)$ is a
homomorphism of groups. Then the mapping
 $x\mapsto \tau(x)x$ from $G_\pi(\Phi,R)$ onto itself is an
automorphism of  $G_\pi(\Phi,R)$, that is denoted by the
letter~$\tau$ and called a \emph{central automorphism}
of~$G_\pi(\Phi,R)$.}

\emph{Every central automorphism of the Chevalley group
$G_\pi(\Phi,R)$ is identical on its commutant. By our assumptions
the elementary subgroup $E_\pi (\Phi,R)$ is a commutant of
$G_\pi(\Phi,R)$ and $E_\pi(\Phi,R)$, therefore on elementary
Chevalley groups all central automorphism are identical. }

\emph{{\bf Ring automorphisms.} Let $\rho: R\to R$ be an
automorphism of the ring~$R$. The mapping $x\mapsto \rho \circ x$
from $G_\pi(\Phi,R)$ onto itself is an automorphism of
$G_\pi(\Phi,R)$, that is denoted by the same letter~$\rho$ and is
called the \emph{ring automorphism} of~$G_\pi(\Phi,R)$. Note that
for all $\alpha\in \Phi$ and $t\in R$ an element $x_\alpha(t)$ is
mapped into $x_\alpha(\rho(t))$.}

\emph{{\bf ``Inner'' automorphisms.} Let $V$ be the space of the
representation~$\pi$ of the group
 $G_\pi (\Phi,R)$, $g\in GL(V)$ is such a matrix that
 $g G_\pi (\Phi,R) g^{-1}=
G_\pi (\Phi,R)$. Then the mapping $x\mapsto gxg^{-1}$ from
$G_\pi(\Phi,R)$ onto itself is an automorphism of~$G(R)$, that is
denoted by $i_g$ and is called the \emph{``inner'' automorphism}
of~$G(R)$, \emph{induced by the element}~$g$ of the group~$GL(V)$.}

\emph{Similarly we can define three types of automorphisms of the
elementary subgroup~$E(R)$. An automorphism~$\sigma$ of
 $G_\pi(\Phi,R)$ (or $E_\pi(\Phi,R)$)
is called \emph{standard}, if it a composition of automorphisms of
these three types.}
\end{definition}

{\bf  Theorem 1.} \emph{Let $E_{ad}(\Phi,R)$ be an elementary
Chevalley group with an irreducible root system of  types $B_2$ or
$G_2$, $R$ be a commutative local ring with~$1/2$. Suppose that if
$\Phi=G_2$ then $1/3\in R$. Then every automorphism of
$E_{ad}(\Phi,R)$ is standard.}

In our case we have the same theorem for the Chevalley groups
$G_{ad}(\Phi,R)$:

\bigskip

{\bf  Theorem 2.} \emph{Let $G_{ad}(\Phi,R)$ be a Chevalley group
with an irreducible root system of  types $B_2$ or $G_2$, $R$ be a
commutative local ring with~$1/2$. Suppose that if $\Phi=G_2$ then
$1/3\in R$. Then every automorphism of $G_{ad}(\Phi,R)$ is
standard.}

\bigskip

Description of nonstandard automorphisms of the groups $SL_3(R)$,
$GL_3(R)$ over local rings with noninvertible~$2$ can be found
in~\cite{Petechuk2}.

The next sections are devoted to the proof of the main theorems.

\section{Replacing the initial automorphism with the special one.}\leavevmode

In this section we use some reasonings from~\cite{Petechuk1}.

\begin{definition}
\emph{By $GL_n(R,J)$ we denote the group of  matrices~$A$ from
$GL_n(R)$ such that $A-E\in M_n(J)$, $J$ is the radical of~$R$.}
\end{definition}

\begin{proposition}\label{p1_0}
By an arbitrary automorphism $\varphi$ of elementary Chevalley group
$E_{ad}(\Phi,R)$ one can construct an isomorpism $\varphi'=
i_{g^{-1}} \varphi$, $g\in GL_n(R)$ of the group
 $E_{ad}(\Phi,R)\subset GL_n(R)$ onto some subgroup in $GL_n(R)$,
such that every matrix $A\in E_{ad}(\Phi,R)$ with elements from the
subring of~$R$, generated by unit, is mapped under the action of
this isomorphism~$\varphi'$ to some matrix from the set~$A\cdot
GL_n(R,J)$.
\end{proposition}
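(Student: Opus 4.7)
The plan is to pass to the residue field and exploit the known classification of automorphisms of Chevalley groups over fields, then lift the inner part back to $R$.

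First, let $k=R/J$ be the residue field of the local ring $R$, and consider the reduction homomorphism $\pi\colon GL_n(R)\to GL_n(k)$. It restricts to a surjection $E_{ad}(\Phi,R)\twoheadrightarrow E_{ad}(\Phi,k)$, since on generators it sends $x_\alpha(t)$ to $x_\alpha(\bar t)$. Denote the kernel by $K=E_{ad}(\Phi,R)\cap GL_n(R,J)$; it is a normal subgroup of $E_{ad}(\Phi,R)$ consisting of matrices congruent to the identity modulo $J$.

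Second, I would argue that the given automorphism $\varphi$ preserves $K$, so that it descends to an automorphism $\bar\varphi$ of $E_{ad}(\Phi,k)$. This is the heart of the argument: one needs an intrinsic, group-theoretic description of the congruence kernel $K$ inside $E_{ad}(\Phi,R)$ so that any abstract automorphism must respect it. This is the step I expect to be the main obstacle and where the techniques from \cite{Petechuk1}, carried out there for $GL_n$, have to be adapted to the Chevalley groups of types $B_2$ and $G_2$ using their commutator relations (R1)--(R6) and the fact that $J$ is the unique maximal ideal of $R$.

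Granting the descent, the theorems of Steinberg and Humphreys say that $\bar\varphi$ is standard: it is a composition $i_{\bar g}\circ\bar\rho\circ\bar\tau$ with $\bar g\in GL_n(k)$, a ring automorphism $\bar\rho$ induced by a field automorphism of $k$, and a central automorphism $\bar\tau$. Under our hypotheses $E_{ad}(\Phi,k)$ equals its own commutator subgroup, so $\bar\tau$ is trivial on it (as noted in the definition of central automorphism). Moreover, any field automorphism of $k$ fixes the prime subfield pointwise, so $\bar\rho$ fixes every matrix of $E_{ad}(\Phi,k)$ with entries in the prime subfield of $k$.

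Third, I would lift $\bar g$ to some $g\in GL_n(R)$, which is possible because $R$ is local and therefore $GL_n(R)\to GL_n(k)$ is surjective. Put $\varphi':=i_{g^{-1}}\circ\varphi$. Its reduction modulo $J$ is $i_{\bar g^{-1}}\circ\bar\varphi=\bar\rho$ on $E_{ad}(\Phi,k)$. For any $A\in E_{ad}(\Phi,R)$ whose entries lie in the subring of $R$ generated by $1$, the reduction $\bar A$ has entries in the prime subfield of $k$, whence $\bar\rho(\bar A)=\bar A$ and so $\pi(\varphi'(A))=\pi(A)$. Equivalently, $\varphi'(A)\cdot A^{-1}\in GL_n(R,J)$, i.e.\ $\varphi'(A)\in A\cdot GL_n(R,J)$, which is exactly the desired conclusion.
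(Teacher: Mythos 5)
Your outline matches the paper's proof almost step for step---reduce mod $J$, invoke the field-case classification, lift the inner part back to $GL_n(R)$---but the one step you flag as ``the main obstacle'' and leave unproved is precisely the only step where the paper needs a substantive input, so as written the proposal has a genuine gap. The paper closes it not by finding an intrinsic description of the congruence kernel $K=E_{ad}(\Phi,R)\cap GL_n(R,J)$, but by working with the subgroup $E_J=E(\Phi,R,J)$ generated by the $x_\alpha(t)$ with $t\in J$, and citing the fact (\cite{Abe1}) that for a local ring this is the \emph{greatest proper normal subgroup} of $E(\Phi,R)$. That characterization is purely group-theoretic, so $E_J$ is automatically characteristic, $\varphi$ preserves it, and $\varphi$ descends to an automorphism of $E(\Phi,R)/E_J=E(\Phi,k)$; no adaptation of Petechuk's $GL_n$ techniques is needed at this point. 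To complete your version you must either quote this result of Abe, or else prove both that your $K$ coincides with $E_J$ and that it is invariant under an abstract automorphism; without one of these the descent to $\bar\varphi$ is unjustified.

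A secondary point: over the residue field, Steinberg's theorem (\cite{Steinberg}, \S\,10) allows a graph automorphism in the standard decomposition; for $B_2$ and $G_2$ the exceptional graph automorphisms occur only in characteristics $2$ and $3$ respectively, which are excluded by the hypotheses $1/2\in R$ (and $1/3\in R$ for $G_2$), so your reduction to $\bar\varphi=i_{\bar g}\circ\bar\rho$ is correct but deserves a sentence. The remainder of your argument---lifting $\bar g$ by surjectivity of $GL_n(R)\to GL_n(k)$ for $R$ local, and using that $\bar\rho$ fixes matrices with entries in the prime subfield---is exactly the paper's.
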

\begin{proof}
Let $J$ be the maximal ideal (radical) of~$R$, $k$ the residue field
$R/J$. Then $E_J=E( \Phi,R,J)$ is a group generated by all
$x_{\alpha}(t)$, $\alpha\in \Phi$, $t\in J$, is the greatest normal
proper subgroup in $E(\Phi,R)$ (see~\cite{Abe1}). Therefore, $E_J$
is invariant under the action of~$\varphi$.

By this reason
$$
\varphi: E (\Phi,R)\to E(\Phi,R)
$$
induces an automorphism
$$ \overline \varphi: E (\Phi,R)/E_J=E
(\Phi,k)\to E(\Phi,k).
$$
The group $E(\Phi,k)$ is a Chevalley group over field, therefore
automorphism $\overline \varphi$ is standard, i.\,e., it has a form
$$
\overline \varphi =  i_{\overline g} \overline \rho,\quad \overline
g\in E_J(E(\Phi,k))\quad \text{ (see~\cite{Steinberg}, \S\,10).}
$$
It is clear that there exists a matrix $g\in GL_n(R)$ such that it
image under factorization   $R$ by~$J$ is~$\overline g$. Note that
it is not necessarily  $g\in N(E(\Phi,R))$.

Consider the mapping $\varphi'= i_{g^{-1}} \varphi$. it is an
isomorphism from the group
 $E_{ad}(\Phi,R)\subset GL_n(R)$ onto some subgroup of $GL_n(R)$ such that
its image under factorization  $R$ by $J$ is $\overline \rho$.

Since the automorphism  $\overline \rho$ identically acts on
matrices with all elements generated by the unit of~$k$, then every
matrix $A\in E(\Phi,R)$ with elements from the subring of~$R$
generated by unit, is mapped under the action of~$\varphi'$ into
some matrix from the set~$A\cdot GL_n(R,J)$.
\end{proof}

Let $a\in E (\Phi,R)$, $a^2=1$. Then the element $e=\frac{1}{2}
(1+a)$ is an idempotent in the ring $M_n(R)$. This idempotent  $e$
defines a decomposition of the free $R$-module $V=R^n$:
$$
V=eV\oplus (1-e)V=V_0\oplus V_1
$$
(the modules $V_0$, $V_1$ are free, since every projective module
over local ring is free). Let $\overline V=\overline V_0 \oplus
\overline V_1$ be decomposition of the $k$-module~$\overline V$ with
respect to~$\overline a$, and
 $\overline e=\frac{1}{2} (1+\overline a)$.

Then we have

\begin{proposition}\label{pr1_1}
The modules \emph{(}subspaces\emph{)}
 $\overline V_0$, $\overline V_1$ are images of the modules $V_0$, $V_1$ under factorization by~$J$.
\end{proposition}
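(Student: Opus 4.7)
The plan is to observe that the decompositions $V = V_0 \oplus V_1$ and $\overline V = \overline V_0 \oplus \overline V_1$ are both induced by the same matrix-theoretic construction (an idempotent built from an involution using $1/2$), and that the reduction map $V \to \overline V$ is compatible with this construction.

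Concretely, I would first set up the reduction map. The canonical projection $R \to k = R/J$ extends to an $R$-linear map $\pi : V = R^n \to \overline V = k^n$ and to the ring homomorphism $M_n(R) \to M_n(k)$, $A \mapsto \overline A$. Since $1/2 \in R$, the image of $1/2$ is $1/2 \in k$, so the idempotent $e = \tfrac12(1+a) \in M_n(R)$ reduces to $\overline e = \tfrac12(1+\overline a) \in M_n(k)$, which is exactly the idempotent used to define $\overline V_0, \overline V_1$. The key step is then the computation
\[
\pi(V_0) \;=\; \pi(eV) \;=\; \overline e \,\pi(V) \;=\; \overline e\,\overline V \;=\; \overline V_0,
\]
where the second equality uses that $\pi$ intertwines the actions of $M_n(R)$ on $V$ and of $M_n(k)$ on $\overline V$ via reduction, and the third uses surjectivity of $\pi : V \to \overline V$ (every residue class mod $J$ of a vector in $k^n$ is hit by some element of $R^n$). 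The same calculation with $1-e$ in place of $e$ yields $\pi(V_1) = \overline V_1$.

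There is essentially no obstacle here: everything reduces to the fact that reduction mod $J$ commutes with the polynomial expression $\tfrac12(1 \pm a)$ defining the two projectors, together with surjectivity of $R^n \to k^n$. The only mild point to mention is the invertibility of $2$, which is available by hypothesis in Theorems~1 and~2 and is what makes the projector $e$ well-defined over $R$ in the first place; after this, compatibility of direct-sum decompositions under reduction is automatic.
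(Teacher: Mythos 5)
Your proposal is correct and follows essentially the same route as the paper: both arguments rest on the identity $\overline{e(x)}=\overline e(\overline x)$ (reduction mod $J$ commutes with the projector $\tfrac12(1+a)$, using $1/2\in R$) together with surjectivity of $V\to\overline V$; you merely package the two inclusions of the paper's proof into the single chain $\pi(eV)=\overline e\,\pi(V)=\overline e\,\overline V$. No gap.
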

\begin{proof} Let us denote the images of $V_0$, $V_1$ under factorization
by $J$ by $\widetilde V_0$, $\widetilde V_1$, respectively. Since
$V_0=\{ x\in V| ex=x\},$ $V_1= \{ x\in V|ex=0\},$  we have
 $\overline e(\overline x)=\frac{1}{2}(1+\overline a)(\overline x)=\frac{1}{2}
(1+\overline a(\overline
x))=\frac{1}{2}(1+\overline{a(x)})=\overline{e(x)}$. Then
$\widetilde V_0\subseteq \overline V_0$, $\widetilde V_1\subseteq
\overline V_1$.

Let $x=x_0+x_1$, $x_0\in V_0$, $x_1\in V_1$. Then $\overline
e(\overline x)=\overline e(\overline x_0)+\overline e (\overline
x_1)=\overline x_0$. If $\overline x\in \widetilde V_0$, then
$\overline x=\overline x_0$.
\end{proof}

Let $b=\varphi'(a)$. Then $b^2=1$ and $b$ is equivalent to $a$
modulo~$J$.

\begin{proposition}\label{pr1_2}
Suppose that $ a,b\in E_\pi(\Phi,R)$, $a^2=b^2=1$, $a$ is a matrix
with elements from the subring of~$R$, generated by the unit, $b$
and $a$ are equivalent modulo~$J$, $V=V_0\oplus V_1$ is a
decomposition of~$V$ with respect to~$a$, $V=V_0'\oplus V_1'$ is a
decomposition of~$V$ with respect to~$b$. Then $\dim V_0'=\dim V_0$,
$\dim V_1'=\dim V_1$.
\end{proposition}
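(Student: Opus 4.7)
The plan is to reduce the question over $R$ to the corresponding question over the residue field $k = R/J$, where dimensions of eigenspaces are preserved under equal matrices, and then to lift this via the preceding proposition.

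First I would apply Proposition~\ref{pr1_1} twice: once to the involution $a$ and its decomposition $V = V_0 \oplus V_1$, and once to the involution $b$ and its decomposition $V = V_0' \oplus V_1'$. This gives that the reductions modulo $J$ of $V_0, V_1$ are precisely the eigenspaces $\overline V_0, \overline V_1$ of $\overline a$, and the reductions modulo $J$ of $V_0', V_1'$ are precisely the eigenspaces $\overline V_0', \overline V_1'$ of $\overline b$.

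Next I would use the hypothesis $a \equiv b \pmod{J}$ to conclude $\overline a = \overline b$ in $E_\pi(\Phi,k)$, hence the two eigenspace decompositions of $\overline V$ coincide: $\overline V_0 = \overline V_0'$ and $\overline V_1 = \overline V_1'$. In particular $\dim_k \overline V_0 = \dim_k \overline V_0'$ and $\dim_k \overline V_1 = \dim_k \overline V_1'$.

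Finally I would invoke the fact already used implicitly in the construction of the decomposition: over the local ring $R$, each of the summands $V_0, V_1, V_0', V_1'$ is a finitely generated projective, hence free, $R$-module, and the rank of such a free module equals the $k$-dimension of its reduction modulo $J$ (by Nakayama's lemma applied to a lifted basis). Combining this with the previous step yields
\[
\dim V_0 = \dim_k \overline V_0 = \dim_k \overline V_0' = \dim V_0',
\]
and similarly $\dim V_1 = \dim V_1'$. There is no serious obstacle here; the only thing to be careful about is the passage from ranks of free $R$-modules to $k$-dimensions of their reductions, but this is a standard consequence of Nakayama's lemma in the local setting and requires no new ideas beyond what has already been established in Proposition~\ref{pr1_1}.
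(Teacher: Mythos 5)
Your proposal is correct and follows essentially the same route as the paper: reduce modulo $J$, use Proposition~\ref{pr1_1} together with $\overline a=\overline b$ to identify $\overline V_i$ with $\overline V_i'$, and then compare ranks of the free summands with the $k$-dimensions of their reductions. The paper merely makes the Nakayama step explicit by lifting the residue basis to elements $f_i\in V_0'\cup V_1'$ and checking that the transition matrix is invertible, which is the same argument in concrete form.
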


\begin{proof}
We have an $R$-basis of the module~$V$ $\{ e_1,\dots,e_n\}$ such
that $\{ e_1,\dots,e_k\}\subset V_0$, $\{ e_{k+1},\dots,
e_n\}\subset V_1$.  It is clear that
$$
\overline a \overline e_i=\overline{ae_i}=\overline {(\sum_{j=1}^n
a_{ij} e_j)}= \sum_{j=1}^n \overline a_{ij} \overline e_j.
$$
Let $\overline V=\overline V_0\oplus \overline V_1$, $\overline V =
\overline V_0'\oplus \overline V_1'$ are decompositions of
$k$-module (space)~$\overline V$ with respect to $\overline a$ and $
\overline b$. It is clear that $\overline V_0= \overline V_0'$,
$\overline V_1 =\overline V_1'$. Therefore, by
Proposition~\ref{pr1_1} the images of the modules $V_0$ and $V_0'$,
$V_1$ and $V_1'$ under factorization by~$J$ coincide. Let us take
such $\{ f_1,\dots, f_k\}\subset V_0'$, $\{ f_{k+1},\dots,
f_n\}\subset V_1'$ that $\overline f_i=\overline e_i$,
$i=1,\dots,n$. Since the matrix of transformation from $\{
e_1,\dots, e_n\}$ to $\{ f_1,\dots, f_n\}$ is invertible (it is
equivalent to the identical matrix modulo~$J$) we have that $\{
f_1,\dots, f_n\}$ is a $R$-basis in~$V$. It is clear that $\{
f_1,\dots, f_k\}$ is a $R$-basis in $V_0'$, $\{ v_{k+1},\dots,
v_n\}$ is a $R$-basis in $V_1'$.
\end{proof}

\section{Images of~$w_{\alpha_i}$}

Consider an adjoint Chevalley group $E=E(\Phi,R)$ with one of root
systems $B_2$ or $G_2$, its adjoint representation in the group
$GL_{10}(R)$ (or $GL_{14}(R)$), in the basis of weight vectors
$v_1=x_{\alpha_1}, v_{-1}=x_{-\alpha_1}, \dots, v_l=x_{\alpha_l},
v_{-l}=x_{-\alpha_{-l}}, V_1=h_{1},V_2=h_{2}$, corresponding to the
Chevalley basis of~$B_2$, or $G_2$.

We suppose that with the help of the automorphism~$\varphi$ we
constructed an isomorphism $\varphi'= i_{g^{-1}} \varphi$, described
in the previous section. Recall that it is an isomorphism from
 $E_{ad}(\Phi,R)\subset GL_n(R)$ onto some subgroup in $GL_n(R)$,
such that its image under factorization  $R$ by $J$ coincides with a
ring automorphism $\overline \rho$.

Consider matrices $h_{\alpha_1}(-1), h_{\alpha_2}(-1)$ (see
Definition~14) in our basis. They have the form
\begin{align*}
 h_{\alpha_1}(-1)&=diag [-1,-1,-1,-1,1,1,1,1,1,1],\\
h_{\alpha_2}(-1)&=E
\end{align*}
for $B_2$ and
\begin{align*}
 h_{\alpha_1}(-1)&=diag [1,1,-1,-1,-1,-1,-1,-1,-1,-1,1,1,1,1],\\
h_{\alpha_2}(-1)&=diag[-1,-1,1,1,-1,-1,1,1,-1,-1,-1,-1,1,1]
\end{align*}
for $G_2$.

According to Proposition~\ref{pr1_2} we see that every matrix
$h_i=\varphi'(h_{\alpha_i}(-1))$ in some basis is diagonal with $\pm
1$ on diagonal, the number of  $1$ and $-1$ coincides with this
number for $h_{\alpha_i}(-1)$. Since $h_1$ and $h_2$ commute, there
exists a basis, where $h_1$ and $h_2$ have the same form as
$h_{\alpha_1}(-1)$ and $h_{\alpha_2}(-1)$. Suppose that we come to
this basis with the help of the matrix~$g_1$. It is clear that
 $g_1\in GL_n(R,J)$. Consider the mapping
 $\varphi_1=i_{g_1}^{-1} \varphi'$. It is also an isomorphism
of the group $E$ onto some subgroup of $GL_n(R)$ such that its image
under factorization $R$ by~$J$ is~$\overline \rho$, and
$\varphi_1(h_{\alpha_i}(-1))=h_{\alpha_i}(-1)$ for  $i=1,2$.

Let us consider the isomorphism~$\varphi_1$.

\bigskip

{\bf Remark 1.} Every element $w_i=w_{\alpha_i}(1)$ maps (under
conjugation) diagonal matrices into diagonal matrices, i.\,e., every
image of $w_i$ has block-monomial form. In particular, it can be
written as a block-monomial matrix, where the first block is
$8\times 8$ for $B_2$ and $12\times 12$ for~$G_2$, and the second
block is $2\times 2$.

\bigskip

After conjugation with the matrices from $GL_n(R,J)$ we come from
weight basis to some other basis of~$V$. Consider the first vector
of this new basis, denote it by~$e$. The Weil group $W$ transitively
acts on the roots of the same length, therefore for every
root~$\alpha_i$ of the same length as the first one there exists
such $w^{(\alpha_i)}\in W$ that $w^{(\alpha_i)} \alpha_1=\alpha_i$.
Similarly, all roots of the second length are also conjugate up to
the action of~$W$. Let $\alpha_k$ be the first root of the length
not equal to the length of~$\alpha_1$, and let $f$ be a $k$-th basis
vector after the last basis change. If $\alpha_j$ is a root
conjugate to $\alpha_k$, then denote by $w_{(\alpha_j)}$ the element
of~$W$ such that $w_{(\alpha_j}) \alpha_k=\alpha_j$. onsider now the
basis $e_1,\dots,  e_{2m+2}$, where $e_1=e$, $e_k=f$, and for $1<
i\le 2m$ either $e_i=\varphi_1(w^{(\alpha_i)})e$, or
$e_i=\varphi_1(w_{(\alpha_i)})f$ depending of the length
of~$\alpha_k$; for $2m< i\le 2m+2$  $e_i$ is not changed. It is
clear that the matrix of this basis change is equivalent to $1$
modulo~$J$. Therefore the obtained set of vectors is a basis.

Clear that the matrix $\varphi_1(w_i)$ ($i=1,2$) on the basis part
 $\{ e_1,\dots,e_{12}\}$
coincides with the matrix  $w_i$ in the initial basis of weight
vectors. Since $h_i(-1)$ are squares of $w_i$, their images are not
changed in the new basis.

Moreover, we know (Remark~1), that  $\varphi_1(w_i)$ is
block-diagonal up to the first $8$ ($12$) and last two elements.
Therefore, the last basis part consisting of two elements can be
changed independently.

Let us denote matrices $w_i$ and $\varphi_1(w_i)$ on this part of
basis by $\widetilde w_i$ and $\widetilde{\varphi_1(w_i)}$
respectively, and 2-dimensional module, generated by $e_{2m+1}$ and
$e_{2m+2}$, by $\widetilde V$.

\begin{lemma}\label{l3_3}
For the root systems $B_2$ and $G_2$ there exists such a basis that
$\widetilde{\varphi_1(w_1)}$ and $\widetilde{\varphi_1(w_2)}$ in
this basis have the same form as $\widetilde{w_1}$ and
$\widetilde{w_2}$, i.\,e. are equal to
$$
\begin{pmatrix}
-1& 2\\
0& 1
\end{pmatrix} \text{ and }
\begin{pmatrix}
1& 0\\
1& -1
\end{pmatrix}
$$
for the case $B_2$ and
$$
\begin{pmatrix}
-1& 3\\
0& 1
\end{pmatrix} \text{ and }
\begin{pmatrix}
1& 0\\
1& -1
\end{pmatrix}
$$
for the case $G_2$.
\end{lemma}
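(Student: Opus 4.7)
The plan has three steps. First, I would record three facts about $b_i := \widetilde{\varphi_1(w_i)}$ that follow immediately from the construction. (i) Each $b_i$ is an involution on $\widetilde{V}$: since $w_i^2 = h_{\alpha_i}(-1)$ is fixed by $\varphi_1$ and the torus acts trivially on the Cartan part of the adjoint representation, $b_i^2 = \widetilde{h_{\alpha_i}(-1)} = I$. (ii) $b_i \equiv \widetilde{w_i} \pmod{J}$, because $\varphi_1 \equiv \overline{\rho} \pmod{J}$ and the entries of $\widetilde{w_i}$ lie in the prime subring of~$R$. (iii) $(b_1 b_2)^m = I$ on $\widetilde{V}$, where $m = 4$ for $B_2$ and $m = 6$ for $G_2$, since $(w_1 w_2)^m \in H$ and the torus acts trivially on $\widetilde{V}$.

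Second, I would construct the required basis of $\widetilde{V}$ from $-1$-eigenspaces. Because $1/2 \in R$, each involution $b_i$ yields the idempotent decomposition $\widetilde{V} = \ker(b_i - I) \oplus \ker(b_i + I)$, and by the same eigenspace-dimension argument as in Proposition~\ref{pr1_2}, each summand is a free $R$-module of rank one. Choose generators $v_i$ of $\ker(b_i + I)$ that reduce modulo $J$ to $e_{2m+i}$, the $-1$-eigenvector of $\widetilde{w_i}$. Then $\{v_1, v_2\}$ is an $R$-basis of $\widetilde{V}$, since the transition matrix from $\{e_{2m+1}, e_{2m+2}\}$ is $\equiv I \pmod{J}$. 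The image of $b_1 - I$ equals $\ker(b_1 + I) = R v_1$, so $b_1 v_2 = a v_1 + v_2$ for some $a \in R$ with $a \equiv 2 \pmod{J}$ (resp.\ $\equiv 3$ for $G_2$); in particular $a$ is a unit. Rescaling $v_2$ by the unit $2/a$ (resp.\ $3/a$), which is $\equiv 1 \pmod{J}$ and preserves both the $-1$-eigenvector property of $v_2$ for $b_2$ and its mod-$J$ reduction, turns $b_1$ into exactly $\widetilde{w_1}$. Applied symmetrically to $b_2$: the image of $b_2 - I$ equals $\ker(b_2 + I) = R v_2$, so $b_2 v_1 = v_1 + q v_2$ for some $q \equiv 1 \pmod{J}$, giving
$$
b_2 = \begin{pmatrix} 1 & 0 \\ q & -1 \end{pmatrix}.
$$

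Third---and this is where I expect the main obstacle to lie---I would force $q = 1$ using relation (iii). Let $M := b_1 b_2$; a direct computation gives $\det M = 1$ and $t := \mathrm{tr}\,M = 2(q - 1)$ for $B_2$ (resp.\ $3q - 2$ for $G_2$). Since the off-diagonal entry $-2$ (resp.\ $-3$) of $M$ is a unit in $R$, the matrices $M$ and $I$ are $R$-linearly independent in $M_2(R)$. Iterating the Cayley--Hamilton identity $M^2 = tM - I$ expresses $M^m - I$ as $\alpha(t) M + \beta(t) I$ for explicit $\alpha, \beta \in \mathbb{Z}[t]$, so $(b_1 b_2)^m = I$ forces $\alpha(t) = \beta(t) = 0$. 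For $B_2$ the relevant identity is $\alpha(t) = t(t^2 - 2) = 0$; since $t^2 - 2$ is a unit (as $t \in J$), one gets $t = 0$, so $q = 1$ using $1/2 \in R$. For $G_2$ the identity $\beta(t) = -(t^2 - 1)(t^2 - 2) = 0$ together with the unit $t^2 - 2$ yields $t^2 = 1$; the congruence $t \equiv 1 \pmod{J}$ and the invertibility of $t + 1$ then force $t = 1$, whence $3(q - 1) = t - 1 = 0$ and $q = 1$ using $1/3 \in R$. This short polynomial computation is where the hypotheses $1/2, 1/3 \in R$ enter essentially.
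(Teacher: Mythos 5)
Your proposal is correct and its skeleton matches the paper's: first normalize $\widetilde{\varphi_1(w_1)}$, then reduce $\widetilde{\varphi_1(w_2)}$ to a single unknown parameter, then kill that parameter with a relation satisfied by $w_1,w_2$ in the Chevalley group. The local differences are worth noting. For the normalization, the paper diagonalizes $b_1$, applies a shear, writes $b_2$ as a general $2\times 2$ matrix, and performs a second explicit basis change plus the involution equations to reach $\left(\begin{smallmatrix}1&0\\ c'&-1\end{smallmatrix}\right)$; you instead take the basis of $\widetilde V$ consisting of the $(-1)$-eigenvectors of the two involutions, which yields both triangular shapes at once from $\mathrm{im}(b_i-I)=\ker(b_i+I)$ -- cleaner, and it uses $1/2\in R$ and the rank count of Proposition~\ref{pr1_2} in exactly the way the paper's setup permits. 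For the final step, the paper uses the braid relation $(w_1w_2)^{m/2}=(w_2w_1)^{m/2}$ (giving $2c'(c'-2)=0$, resp.\ $3c'(3c'-1)(c'-1)=0$), while you use the Coxeter relation $(b_1b_2)^m=I$ on $\widetilde V$ together with Cayley--Hamilton; both are resolved by the same ``one factor is a unit'' argument, and your polynomial identities and congruences check out.

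One point you should tighten: your fact (iii) as stated justifies only that $(w_1w_2)^m$ acts trivially on $\widetilde V$, whereas you need this for $\varphi_1\bigl((w_1w_2)^m\bigr)$. The element $(w_1w_2)^m$ lies not merely in $H$ but in the subgroup generated by the $h_\alpha(-1)$, each of which (in the adjoint group, for these two root systems) is a product of $h_{\alpha_1}(-1)$ and $h_{\alpha_2}(-1)$; since $\varphi_1$ is already known to fix these two elements, $\varphi_1\bigl((w_1w_2)^m\bigr)=(w_1w_2)^m$ and your relation $(b_1b_2)^m=I$ on $\widetilde V$ follows (using also the block-diagonality of Remark~1 to restrict multiplicatively to $\widetilde V$). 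This is a one-sentence repair, but without it the key relation of your Step~3 is not yet established. The paper's choice of the braid relation sidesteps this issue entirely, since that identity involves no residual $H$-factor.
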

\begin{proof}
Since $\widetilde w_1$ is an involution and $\widetilde V_1^1$ has
dimension~$1$, then there exists a basis $\{ e_1,e_2\}$, where
$\widetilde{\varphi_1(w_1)}$ has the form $diag [-1,1]$. In the
basis $\{ e_1, e_2-e_1\}$ the matrix $\widetilde{\varphi_1(w_1)}$
has the obtained form for $B_2$, and in the basis $\{ e_1,
e_2-3/2e_1\}$ it has the obtained form for~$G_2$.

Let the matrix $\widetilde{\varphi_1(w_2)}$ in this basis be
$$
\begin{pmatrix}
a& b\\
c& d
\end{pmatrix}.
$$
Let us make the basis change with the matrix
$$
\begin{pmatrix}
1& (1-a)/c\\
0& 1+\frac{2(1-a)}{c}
\end{pmatrix}.
$$
Under this basis change the matrix $\widetilde{\varphi_1(w_1)}$ is
not moved, and the matrix $\widetilde{\varphi_1(w_2)}$ has the form
$$
\begin{pmatrix}
1& b'\\
c'& d'
\end{pmatrix}.
$$
Since this matrix is an involution, we have $c'(1+d')=0$,
$1+b'c'=1$. Therefore, $d'=-1$, $b'=0$. Now let us consider the
cases $B_2$ and $G_2$ separately.

For the case $B_2$ we can use the condition
$$
\left(\begin{pmatrix} -1& 1\\
0& 1
\end{pmatrix} \begin{pmatrix}
1& 0\\
c'& -1
\end{pmatrix}\right)^2 =
\left(\begin{pmatrix}
1& 0\\
c'& -1
\end{pmatrix}
\begin{pmatrix} -1& 1\\
0& 1
\end{pmatrix}\right)^2.
$$
This condition gives (its second line and first row) $2c'(c'-2)=0$,
so $c'=2$.

For the case $G_2$
$$
\left(\begin{pmatrix} -1& 1\\
0& 1
\end{pmatrix} \begin{pmatrix}
1& 0\\
c'& -1
\end{pmatrix}\right)^3 =
\left(\begin{pmatrix}
1& 0\\
c'& -1
\end{pmatrix}
\begin{pmatrix} -1& 1\\
0& 1
\end{pmatrix}\right)^3,
$$
therefore $3c'(3c'-1)(c'-1)=0$. Since $3\in R^*$, $c'\equiv 1\mod
J$, $3c'-1\equiv 2 \mod J$, we have $c'=1$.
\end{proof}

Thus, now we can move to an isomorphism $\varphi_2$, that is
obtained from $\varphi_1$ by some basis change with the help of a
matrix from $GL_n(R,J)$. It has all described above properties of
$\varphi_1$, but also additionally  $\varphi_2(w_i)=w_i$ for all
$i=1,\dots,l$.

Suppose that we now have namely isomorphism~$\varphi_2$ of this
form.

We need to consider separately the cases $B_2$ and $G_2$. Having and
isomorphism $\varphi_2$, replacing all elements of the Weil group,
we want with one more basis change move to the new isomorphism
$\varphi_3$, that has all properties of $\varphi_2$, but also does
not move all elements $x_{\alpha_i}(1)$, $\alpha_i\in \Phi$.

\section{Images of $x_{\alpha_i}(1)$ in the case $B_2$.}\leavevmode

In the case $B_2$ we have roots (ordered as follows):
$e_1,-e_1,e_2,-e_2, e_1+e_2, -e_1-e_2, e_1-e_2, e_2-e_1$. Therefore,
the adjoint representation has dimension~$10$. In this
representation {\small
$$
w_{e_1-e_2}= \begin{pmatrix}
0& 0& -1& 0& 0& 0& 0& 0& 0& 0\\
0& 0& 0& -1& 0& 0& 0& 0& 0& 0\\
1& 0& 0& 0& 0& 0& 0& 0& 0& 0\\
0& 1& 0& 0& 0& 0& 0& 0& 0& 0\\
0& 0& 0& 0& 1& 0& 0& 0& 0& 0\\
0& 0& 0& 0& 0& 1& 0& 0& 0& 0\\
0& 0& 0& 0& 0& 0& 0& -1& 0& 0\\
0& 0& 0& 0& 0& 0& -1& 0& 0& 0\\
0& 0& 0& 0& 0& 0& 0& 0& -1& 1\\
0& 0& 0& 0& 0& 0& 0& 0& 0& 1
\end{pmatrix},\quad
w_{e_1}= \begin{pmatrix}
0& 1& 0& 0& 0& 0& 0& 0& 0& 0\\
1& 0& 0& 0& 0& 0& 0& 0& 0& 0\\
0& 0& 1& 0& 0& 0& 0& 0& 0& 0\\
0& 0& 0& 1& 0& 0& 0& 0& 0& 0\\
0& 0& 0& 0& 0& 0& 0& -1& 0& 0\\
0& 0& 0& 0& 0& 0& -1& 0& 0& 0\\
0& 0& 0& 0& 0& -1& 0& 0& 0& 0\\
0& 0& 0& 0& -1& 0& 0& 0& 0& 0\\
0& 0& 0& 0& 0& 0& 0& 0& -1& 0\\
0& 0& 0& 0& 0& 0& 0& 0& -2& 1
\end{pmatrix},
$$
$$
w_{e_1+e_2}= \begin{pmatrix}
0& 0& 0& -1& 0& 0& 0& 0& 0& 0\\
0& 0& -1& 0& 0& 0& 0& 0& 0& 0\\
0& 1& 0& 0& 0& 0& 0& 0& 0& 0\\
1& 0& 0& 0& 0& 0& 0& 0& 0& 0\\
0& 0& 0& 0& 0& -1& 0& 0& 0& 0\\
0& 0& 0& 0& -1& 0& 0& 0& 0& 0\\
0& 0& 0& 0& 0& 0& 1& 0& 0& 0\\
0& 0& 0& 0& 0& 0& 0& 1& 0& 0\\
0& 0& 0& 0& 0& 0& 0& 0& 1& -1\\
0& 0& 0& 0& 0& 0& 0& 0& 0& -1
\end{pmatrix},\quad
w_{e_2}= \begin{pmatrix}
-1& 0& 0& 0& 0& 0& 0& 0& 0& 0\\
0& -1& 0& 0& 0& 0& 0& 0& 0& 0\\
0& 0& 0& -1& 0& 0& 0& 0& 0& 0\\
0& 0& -1& 0& 0& 0& 0& 0& 0& 0\\
0& 0& 0& 0& 0& 0& 1& 0& 0& 0\\
0& 0& 0& 0& 0& 0& 0& 1& 0& 0\\
0& 0& 0& 0& 1& 0& 0& 0& 0& 0\\
0& 0& 0& 0& 0& 1& 0& 0& 0& 0\\
0& 0& 0& 0& 0& 0& 0& 0& 1& 0\\
0& 0& 0& 0& 0& 0& 0& 0& 2& -1
\end{pmatrix},
$$
$$
x_{e_2}(1)= \begin{pmatrix}
1& 0& 0& 0& 0& 0& 2& 0& 0& 0\\
0& 1& 0& 0& 0& -2& 0& 0& 0& 0\\
0& 0& 1& -1& 0& 0& 0& 0& 2& -2\\
0& 0& 0& 1& 0& 0& 0& 0& 0& 0\\
1& 0& 0& 0& 1& 0& 1& 0& 0& 0\\
0& 0& 0& 0& 0& 1& 0& 0& 0& 0\\
0& 0& 0& 0& 0& 0& 1& 0& 0& 0\\
0& -1& 0& 0& 0& 1& 0& 1& 0& 0\\
0& 0& 0& 0& 0& 0& 0& 0& 1& 0\\
0& 0& 0& 1& 0& 0& 0& 0& 0& 1
\end{pmatrix}.
$$
}

It is clear that any conditions that hold for elements of the
Chevalley group, hold also and for their images under the
isomorphism~$\varphi_2$. We will use this fact.

 Using the condition $w_{e_1}\cdot x_{e_2}(1)=x_{e_2}(1)\cdot w_{e_1}$,
we obtain {\small
\begin{multline*}
 x_{e_2}=\varphi_2(x_{e_2}(1))=\\
 =\begin{pmatrix}
a_{1,1}& a_{1,2}& a_{1,3}& a_{1,4}& a_{1,5}& a_{1,6}& a_{1,7}& a_{1,8}& a_{1,9}& a_{1,10}\\
a_{1,2}& a_{1,1}& a_{1,3}& a_{1,4}& -a_{1,8}& -a_{1,7}& -a_{1,6}& -a_{1,5}& -a_{1,9}-2a_{1,10}& a_{1,10}\\
a_{3,1}& a_{3,1}& a_{3,3}& a_{3,4}& a_{3,5}& a_{3,6}& -a_{3,6}& -a_{3,5}& a_{3,9}& -a_{3,9}\\
a_{4,1}& a_{4,1}& a_{4,3}& a_{4,4}& a_{4,5}& a_{4,6}& -a_{4,6}& -a_{4,5}& a_{4,9}& -a_{4,9}\\
a_{5,1}& a_{5,2}& a_{5,3}& a_{5,4}& a_{5,5}& a_{5,6}& a_{5,7}& a_{5,8}& a_{5,9}& a_{5,10}\\
a_{6,1}& a_{6,2}& a_{6,3}& a_{6,4}& a_{6,5}& a_{6,6}& a_{6,7}& a_{6,8}& a_{6,9}& a_{6,10}\\
-a_{6,2}& -a_{6,1}& -a_{6,3}& -a_{6,4}& a_{6,8}& a_{6,7}& a_{6,6}& a_{6,5}& a_{6,9}+2a_{6,10}& -a_{6,10}\\
-a_{5,2}& -a_{5,1}& -a_{5,3}& -a_{5,4}& a_{5,8}& a_{5,7}& a_{5,6}& a_{5,5}& a_{5,9}+2a_{5,10}& -a_{5,10}\\
a_{9,1}& -a_{9,1}& 0& 0& a_{9,5}& a_{9,6}& a_{9,6}& a_{9,5}& a_{9,9}& 0\\
a_{10,1}& a_{10,1}-2a_{9,1}& a_{10,3}& a_{10,4}& a_{10,5}& a_{10,6}&
2a_{9,6}-a_{10,6}& 2a_{9,5}-a_{10,5}& a_{10,9}& a_{9,9}-a_{10,9}
\end{pmatrix}
\end{multline*}
} (it can be checked by direct calculations).

 Making basis change with the block-diagonal matrix, that is identical on the basis part
$$\{v_{e_1+e_2}, v_{-e_1-e_2},
v_{e_1-e_2}, v_{e_2-e_1}, V_1, V_2\}$$ and has the form
$$
\begin{pmatrix}
2a_{1,7}/(a_{1,7}^2+a_{1,8}^2)& -2a_{1,8}/(a_{1,7}^2+a_{1,8}^2)&
0&0\\
-2a_{1,8}/(a_{1,7}^2+a_{1,8}^2)& 2a_{1,7}/(a_{1,7}^2+a_{1,8}^2)&
0&0\\
0&0& 2a_{1,7}/(a_{1,7}^2+a_{1,8}^2)& -2a_{1,8}/(a_{1,7}^2+a_{1,8}^2)\\
0& 0& -2a_{1,8}/(a_{1,7}^2+a_{1,8}^2)&
2a_{1,7}/(a_{1,7}^2+a_{1,8}^2)
\end{pmatrix}
$$
on the part $\{ v_{e_1}, v_{-e_1}, v_{e_2}, v_{-e_2}\}$, we do not
move the elements $w_i$, but now $a_{1,7}$ is equal to~$2$, and
$a_{1,8}$ us equal to zero~$0$. At the same time this basis change
is equivalent to identical one modulo~$J$.

Similarly, basis change with the help of block-diagonal matrix, that
is identical on the basis part $\{v_{e_1}, v_{-e_1}, v_{e_2},
v_{-e_2}, V_1, V_2\}$ and having the form
$$
\begin{pmatrix}
2a_{3,9}/(a_{3,9}^2+a_{4,9}^2)& -2a_{4,9}/(a_{3,9}^2+a_{4,9}^2)&
0&0\\
-2a_{4,9}/(a_{3,9}^2+a_{4,9}^2)& 2a_{3,9}/(a_{3,9}^2+a_{4,9}^2)&
0&0\\
0&0& 2a_{3,9}/(a_{3,9}^2+a_{4,9}^2)& -2a_{4,9}/(a_{3,9}^2+a_{4,9}^2)\\
0& 0& -2a_{4,9}/(a_{3,9}^2+a_{4,9}^2)&
2a_{3,9}/(a_{3,9}^2+a_{4,9}^2)
\end{pmatrix}
$$
on the part $\{ v_{e_1+e_2}, v_{-e_1-e_2}, v_{e_1-e_2},
v_{e_2-e_1}\}$, also does not move the elements $w_i$, $a_{1,7}$,
$a_{1,8}$, but now  $a_{3,9}$ is equal to~$2$, $a_{4,9}$ equal
to~$0$. Also this change is identical modulo~$J$.

We suppose that after these two basis changes we move to an
isomorphism~$\varphi_3$.

Now we consider the matrix $x_{e_1+e_2}$, the image of
$x_{e_1+e_2}(1)$. It commutes with $h_{e_1-e_2}(-1)$ and
$w_{e_1-e_2}$, therefore it is equal to
$$
\begin{pmatrix}
b_{1,1}& b_{1,2}& b_{1,3}& b_{1,4}& 0& 0& 0& 0& 0& 0\\
b_{2,1}& b_{2,2}& b_{2,3}& b_{2,4}& 0& 0& 0& 0& 0& 0\\
-b_{1,3}& -b_{1,4}& b_{1,1}& b_{1,2}& 0& 0& 0& 0& 0& 0\\
-b_{2,3}& -b_{2,4}& b_{2,1}& b_{2,2}& 0& 0& 0& 0& 0& 0\\
0& 0& 0& 0& b_{5,5}& b_{5,6}& b_{5,7}& -b_{5,7}& 0& b_{5,10}\\
0& 0& 0& 0& b_{6,5}& b_{6,6}& b_{6,7}& -b_{6,7}& 0& b_{6,10}\\
0& 0& 0& 0& b_{7,5}& b_{7,6}& b_{7,7}& b_{7,8}& b_{7,9}& b_{7,10}\\
0& 0& 0& 0& -b_{7,5}& -b_{7,6}& b_{7,8}& b_{7,7}& b_{7,9}& -b_{7,9}-b_{7,10}\\
0& 0& 0& 0& b_{9,5}& b_{9,6}& b_{9,7}& b_{9,8}& b_{9,9}& b_{9,10}\\
0& 0& 0& 0& 2b_{9,5}& 2b_{9,6}& b_{9,7}-b_{9,8}& b_{9,8}-b_{9,7}& 0&
b_{9,9}+2b_{9,10}
\end{pmatrix}.
$$

We will use the following list of conditions:
\begin{align*}
Con1&:=(x_{e_2}h_{e_1+e_2}(-1)x_{e_2}h_{e_1+e_2}(-1)=1),\\
Con2&:=(x_{e_1+e_2}x_{e_1-e_2}=x_{e_1-e_2}x_{e_1+e_2}):=(x_{e_1+e_2}w_{e_2}x_{e_1+e_2}w_{e_2}^{-1}=w_{e_2}x_{e_1+e_2}w_{e_2}^{-1}x_{e_1+e_2}),\\
Con3&:=(x_{e_1+e_2}x_{e_2}=x_{e_2}x_{e_1+e_2}),\\
Con4&:=(x_{e_1+e_2}^2x_{e_2}x_{e_1}=x_{e_1}x_{e_2}),\\
Con5&:=(x_{e_1-e_2}w_{e_1-e_2}x_{e_1-e_2}w_{e_1-e_2}^{-1}z_{e_1-e_2}=w_{e_1-e_2}).
\end{align*}

Let us denote $y_1=a_{1,1}-1$, $y_{1,2}=a_{1,2}$, $y_3=a_{1,3}$,
$y_4=a_{1,4}$, $y_5=a_{1,5}$, $y_6=a_{1,6}$, $y_7=a_{1,9}$,
$y_8=a_{1,10}$, $y_9=a_{3,1}$, $y_{10}=a_{3,3}-1$,
$y_{11}=a_{3,4}+1$, $y_{12}=a_{3,5}$, $y_{13}=a_{3,6}$,
$y_{14}=a_{4,1}$, $y_{15}=a_{4,3}$, $y_{16}=a_{4,4}-1$,
$y_{17}=a_{4,5}$, $y_{18}=a_{4,6}$, $y_{19}=a_{5,1}-1$,
$y_{20}=a_{5,2}$, $y_{21}=a_{5,3}$, $y_{22}=a_{5,4}$,
$y_{23}=a_{5,5}-1$, $y_{24}=a_{5,6}$, $y_{25}=a_{5,7}-1$,
$y_{26}=a_{5,8}$, $y_{27}=a_{5,9}$, $y_{28}=a_{5,10}$,
$y_{29}=a_{6,1}$, $y_{30}=a_{6,2}$, $y_{31}=a_{6,3}$,
$y_{32}=a_{6,4}$, $y_{33}=a_{6,5}$, $y_{34}=a_{6,6}-1$,
$y_{35}=a_{6,7}$, $y_{36}=a_{6,8}$, $y_{37}=a_{6,9}$,
$y_{38}=a_{6,10}$, $y_{39}a_{9,1}$, $y_{40}=a_{9,5}$,
$y_{41}=a_{9,6}$, $y_{42}=a_{9,9}-1$, $y_{43}=a_{10,1}$,
$y_{44}=a_{10,3}$, $y_{45}=a_{10,4}-1$, $y_{46}=a_{10,5}$,
$y_{47}=a_{10,6}$, $y_{48}=a_{10,9}$, $y_{49}=b_{1,1}-1$,
$y_{50}=b_{1,2}$, $y_{51}=b_{1,3}$, $y_{52}=b_{1,4}+1$,
$y_{53}=b_{2,1}$, $y_{54}=b_{2,2}-1$, $y_{55}=b_{2,3}$,
$y_{56}=b_{2,4}$, $y_{57}=b_{5,5}-1$, $y_{58}=b_{5,6}+1$,
$y_{59}=b_{5,7}$, $y_{60}=b_{5,10}+1$, $y_{61}=b_{6,5}$,
$y_{62}=b_{6,6}-1$, $y_{63}=b_{6,7}$, $y_{64}=b_{6,10}$,
$y_{65}=b_{7,5}$, $y_{66}=b_{7,6}$, $y_{67}=b_{7,7}-1$,
$y_{68}=b_{7,8}$, $y_{69}=b_{7,9}$, $y_{70}=b_{7,10}$,
$y_{71}=b_{9,5}$, $y_{72}=b_{9,6}-1$, $y_{73}=b_{9,7}$,
$y_{74}=b_{9,8}$, $y_{75}=b_{9,9}-1$, $y_{76}=b_{9,10}$. All these
$y_i$ are from~$J$. From conditions 1--5 we can choose $76$
equalities from the following positions (these equations are linear
up to $y_i$):

condition $Con1$: positions $(1,1)$, $(1,2)$, $(1,3)$, $(1,4)$,
$(1,5)$, $(1,6)$, $(1,7)$, $(1,8)$, $(1,9)$, $(1,10)$, $(3,1)$,
$(3,3)$, $(3,4)$, $(3,5)$, $(3,6)$, $(3,10)$, $(4,1)$, $(4,3)$,
$(4,4)$, $(5,1)$, $(5,2)$, $(5,4)$, $(5,6)$, $(5,7)$, $(5,8)$,
$(5,9)$, $(6,6)$, $(9,1)$, $(9,4)$ ($29$ equalities);

condition $Con2$: positions $(1,3)$, $(2,3)$, $(3,3)$, $(5,5)$,
$(5,6)$, $(5,7)$, $(5,8)$, $(5,9)$, $(5,10)$, $(6,8)$, $(9,5)$,
$(9,6)$, $(9,9)$, $(10,5)$, $(10,8)$ ($15$ equalities);

condition $Con3$: positions $(1,1)$, $(1,2)$, $(1,4)$, $(1,6)$,
$(1,7)$, $(2,5)$, $(2,6)$, $(2,9)$, $(2,10)$, $(3,1)$, $(3,2)$,
$(3,4)$, $(3,6)$, $(3,7)$, $(3,10)$, $(5,1)$, $(5,2)$, $(5,4)$,
$(5,5)$, $(5,6)$, $(5,7)$, $(5,9)$, $(8,2)$, $(8,4)$ ($24$
equalities);

condition $Con4$: positions $(5,6)$,  $(5,8)$, $(5,10)$, $(6,6)$,
$(7,6)$, $(10,6)$ ($6$ equalities);

condition $Con5$: positions $(2,4)$, $(6,7)$ ($2$~equalities).

If we write the matrix of this linear system modulo~$J$, we obtain a
matrix $76\times 76$ with entries $0, \pm 1, \pm 2$ and determinant
$2^{36}$ (it is checked by direct calculations). Therefore, its
determinant is invertible in~$R$. Consequently, our system of
equations has a unique solution $y_1=\dots=y_{76}=0$.
 Thus,
$\varphi_3(x_{\alpha_i}(1))=x_{\alpha_i}(1)$.

As for elements $c_t=\varphi_2(h_{e_1+e_2}(t))$, since $c_t$
commutes with $h_{e_1+e_2}(-1)$ and $w_{e_1-e_2}$, we have
$$
c_t=\begin{pmatrix}
c_{1,1}& c_{1,2}& c_{1,3}& c_{1,4}& 0& 0& 0& 0& 0& 0\\
c_{2,1}& c_{2,2}& c_{2,3}& c_{2,4}& 0& 0& 0& 0& 0& 0\\
-c_{1,3}& -c_{1,4}& c_{1,1}& c_{1,2}& 0& 0& 0& 0& 0& 0\\
-c_{2,3}& -c_{2,4}& c_{2,1}& c_{2,2}& 0& 0& 0& 0& 0& 0\\
0& 0& 0& 0& c_{5,5}& c_{5,6}& c_{5,7}& -c_{5,7}& 0& c_{5,10}\\
0& 0& 0& 0& c_{6,5}& c_{6,6}& c_{6,7}& -c_{6,7}& 0& c_{6,10}\\
0& 0& 0& 0& c_{7,5}& c_{7,6}& c_{7,7}& c_{7,8}& c_{7,9}& c_{7,10}\\
0& 0& 0& 0& -c_{7,5}& -c_{7,6}& c_{7,8}& c_{7,7}& c_{7,9}& -c_{7,9}-c_{7,10}\\
0& 0& 0& 0& c_{9,5}& c_{9,6}& c_{9,7}& c_{9,8}& c_{9,9}& c_{9,10}\\
0& 0& 0& 0& 2c_{9,5}& 2c_{9,6}& c_{9,7}-c_{9,8}& c_{9,8}-c_{9,7}& 0&
c_{9,9}+2c_{9,10}
\end{pmatrix}.
$$

From  $c_tx_{e_1-e_2}=x_{e_1-e_2}c_t$ we obtain
$c_{1,2}=c_{1,3}=c_{2,1}=c_{2,4}=c_{5,7}=c_{6,7}=c_{7,5}=c_{7,6}=c_{7,8}=c_{7,9}=c_{9,7}=c_{9,8}=c_{7,10}=0$,
$c_{9,9}=c_{7,7}$.

From  $c_tw_{e_1+e_2}c_t=w_{e_1+e_2}$ it follows $c_{2,3}=-c_{1,4}$,
$c_{7,7}=1$; from
$c_tx_{e_2}c_t^{-1}x_{e_2}-x_{e_2}c_tx_{e_2}c_t^{-1}$ it follows
$c_{1,4}=0$, $c_{1,1}c_{2,2}=1$,
$c_{6,5}=c_{5,6}=c_{5,10}=c_{6,10}=c_{9,5}=c_{9,6}=c_{9,10}=0$,
$c_{5,5}c_{6,6}=1$, and again from the previous condition
$c_{5,5}=c_{1,1}^2$.

So $c_t=h_{e_1+e_2}(s)$ for some $s\in R^*$. Similarly, for
$d_t=\varphi_3(h_{e_2}(t))$ we have $d_t=h_{e_2}(s)$ with the
same~$s$.

Therefore,  $\varphi_3$ is such that
$\varphi_3(x_{\alpha_i}(1))=x_{\alpha_i}(1)$,
$\varphi_3(h_{\alpha_i}(t))=h_{\alpha_i}(s(t))$ for all $\alpha_i\in
\Phi$, $t\in R^*$.

Let us show the same for  $G_2$.

\section{Images of $x_{\alpha_i}(1)$ and
$h_{\alpha_2}(2)$ for the case $G_2$.}\leavevmode

Recall that {\small
$$ w_1=\left(\begin{array}{cccccccccccccc}
0& -1& 0& 0& 0& 0& 0& 0& 0& 0& 0& 0& 0& 0\\
-1& 0& 0& 0& 0& 0& 0& 0& 0& 0& 0& 0& 0& 0\\
0& 0& 0& 0& 0& 0& 0& 0& 1& 0& 0& 0& 0& 0\\
0& 0& 0& 0& 0& 0& 0& 0& 0& 1& 0& 0& 0& 0\\
0& 0& 0& 0& 0& 0& 1& 0& 0& 0& 0& 0& 0& 0\\
0& 0& 0& 0& 0& 0& 0& 1& 0& 0& 0& 0& 0& 0\\
0& 0& 0& 0& -1& 0& 0& 0& 0& 0& 0& 0& 0& 0\\
0& 0& 0& 0& 0& -1& 0& 0& 0& 0& 0& 0& 0& 0\\
0& 0& -1& 0& 0& 0& 0& 0& 0& 0& 0& 0& 0& 0\\
0& 0& 0& -1& 0& 0& 0& 0& 0& 0& 0& 0& 0& 0\\
0& 0& 0& 0& 0& 0& 0& 0& 0& 0& 1& 0& 0& 0\\
0& 0& 0& 0& 0& 0& 0& 0& 0& 0& 0& 1& 0& 0\\
0& 0& 0& 0& 0& 0& 0& 0& 0& 0& 0& 0& -1& 3\\
0& 0& 0& 0& 0& 0& 0& 0& 0& 0& 0& 0& 0& 1
\end{array}\right),
$$
$$
w_2=\left(\begin{array}{cccccccccccccc}
0& 0& 0& 0& 1& 0& 0& 0& 0& 0& 0& 0& 0& 0\\
0& 0& 0& 0& 0& 1& 0& 0& 0& 0& 0& 0& 0& 0\\
0& 0& 0& -1& 0& 0& 0& 0& 0& 0& 0& 0& 0& 0\\
0& 0& -1& 0& 0& 0& 0& 0& 0& 0& 0& 0& 0& 0\\
1& 0& 0& 0& 0& 0& 0& 0& 0& 0& 0& 0& 0& 0\\
0& 1& 0& 0& 0& 0& 0& 0& 0& 0& 0& 0& 0& 0\\
0& 0& 0& 0& 0& 0& 1& 0& 0& 0& 0& 0& 0& 0\\
0& 0& 0& 0& 0& 0& 0& 1& 0& 0& 0& 0& 0& 0\\
0& 0& 0& 0& 0& 0& 0& 0& 0& 0& 1& 0& 0& 0\\
0& 0& 0& 0& 0& 0& 0& 0& 0& 0& 0& 1& 0& 0\\
0& 0& 0& 0& 0& 0& 0& 0& -1& 0& 0& 0& 0& 0\\
0& 0& 0& 0& 0& 0& 0& 0& 0& -1& 0& 0& 0& 0\\
0& 0& 0& 0& 0& 0& 0& 0& 0& 0& 0& 0& 1& 0\\
0& 0& 0& 0& 0& 0& 0& 0& 0& 0& 0& 0& 1& -1
\end{array}\right),
$$
$$
x_{\alpha_1}(1)=\left(\begin{array}{cccccccccccccc}
1& -1& 0& 0& 0& 0& 0& 0& 0& 0& 0& 0& -2& 3\\
0& 1& 0& 0& 0& 0& 0& 0& 0& 0& 0& 0& 0& 0\\
0& 0& 1& 0& 0& 0& 0& 0& 0& 0& 0& 0& 0& 0\\
0& 0& 0& 1& 0& -1& 0& 1& 0& 1& 0& 0& 0& 0\\
0& 0& 3& 0& 1& 0& 0& 0& 0& 0& 0& 0& 0& 0\\
0& 0& 0& 0& 0& 1& 0& -2& 0& -3& 0& 0& 0& 0\\
0& 0& 3& 0& 2& 0& 1& 0& 0& 0& 0& 0& 0& 0\\
0& 0& 0& 0& 0& 0& 0& 1& 0& 3& 0& 0& 0& 0\\
0& 0& -1& 0& -1& 0& -1& 0& 1& 0& 0& 0& 0& 0\\
0& 0& 0& 0& 0& 0& 0& 0& 0& 1& 0& 0& 0& 0\\
0& 0& 0& 0& 0& 0& 0& 0& 0& 0& 1& 0& 0& 0\\
0& 0& 0& 0& 0& 0& 0& 0& 0& 0& 0& 1& 0& 0\\
0& 1& 0& 0& 0& 0& 0& 0& 0& 0& 0& 0& 1& 0\\
0& 0& 0& 0& 0& 0& 0& 0& 0& 0& 0& 0& 0& 1
\end{array}\right),
$$
$$
x_{\alpha_2}(1)= \left(\begin{array}{cccccccccccccc}
1& 0& 0& 0& 0& 0& 0& 0& 0& 0& 0& 0& 0& 0\\
0& 1& 0& 0& 0& 1& 0& 0& 0& 0& 0& 0& 0& 0\\
0& 0& 1& -1& 0& 0& 0& 0& 0& 0& 0& 0& 1& -2\\
0& 0& 0& 1& 0& 0& 0& 0& 0& 0& 0& 0& 0& 0\\
-1& 0& 0& 0& 1& 0& 0& 0& 0& 0& 0& 0& 0& 0\\
0& 0& 0& 0& 0& 1& 0& 0& 0& 0& 0& 0& 0& 0\\
0& 0& 0& 0& 0& 0& 1& 0& 0& 0& 0& 0& 0& 0\\
0& 0& 0& 0& 0& 0& 0& 1& 0& 0& 0& 0& 0& 0\\
0& 0& 0& 0& 0& 0& 0& 0& 1& 0& 0& 0& 0& 0\\
0& 0& 0& 0& 0& 0& 0& 0& 0& 1& 0& 1& 0& 0\\
0& 0& 0& 0& 0& 0& 0& 0& -1& 0& 1& 0& 0& 0\\
0& 0& 0& 0& 0& 0& 0& 0& 0& 0& 0& 1& 0& 0\\
0& 0& 0& 0& 0& 0& 0& 0& 0& 0& 0& 0& 1& 0\\
0& 0& 0& 1& 0& 0& 0& 0& 0& 0& 0& 0& 0& 1
\end{array}\right),
$$
}
$$
h_{\alpha_2}(2)=diag[2,1/2,1/4,4,1/2,2,1,1,2,1/2,1/2,2,1,1].
$$
The fact that $x_1=\varphi_2(x_{\alpha_1}(1))$ commutes with
$h_{\alpha_1}(-1)$ and with
$w_{3\alpha_1+2\alpha_2}=w_2w_1w_2w_1^{-1}w_2^{-1}$, gives
 {\small
$$ x_1=\left(\begin{array}{cccccccccccccc}
a_1& a_2& 0& 0& 0& 0& 0& 0& 0& 0& a_{11}& -a_{11}& a_{13}& -3/2a_{13}\\
b_1& b_2& 0& 0& 0& 0& 0& 0& 0& 0& b_{11}& -b_{11}& b_{13}& -3/2b_{13}\\
0& 0& c_3& c_4& c_5& c_6& c_7& c_8& c_9& c_{10}& 0& 0& 0& 0\\
0& 0& d_3& d_4& d_5& d_6& d_7& d_8& d_9& d_{10}& 0& 0& 0& 0\\
0& 0& e_3& e_4& e_5& e_6& e_7& e_8& e_9& e_{10}& 0& 0& 0& 0\\
0& 0& f_3& f_4& f_5& f_6& f_7& f_8& f_9& f_{10}& 0& 0& 0& 0\\
0& 0& -f_{10}& -f_9& -f_8& -f_7& f_6& f_5& f_4& f_3& 0& 0& 0& 0\\
0& 0& -e_{10}& -e_9& -e_8& -e_7& e_6& e_5& e_4& e_3& 0& 0& 0& 0\\
0& 0& -d_{10}& -d_9& -d_8& -d_7& d_6& d_5& d_4& d_3& 0& 0& 0& 0\\
0& 0& -c_{10}& -c_9& -c_8& -c_7& c_6& c_5& c_4& c_3& 0& 0& 0& 0\\
g_1& g_2& 0& 0& 0& 0& 0& 0& 0& 0& g_{11}& g_{12}& g_{13}& g_{14}\\
-g_1& -g_2& 0& 0& 0& 0& 0& 0& 0& 0& g_{11}& g_{12}& -g_{13}& g_{14}+3g_{13}\\
h_1& h_2& 0& 0& 0& 0& 0& 0& 0& 0& h_{11}& -h_{11}+3i_{11}& h_{13}& 3/2(i_{14}-h_{13})\\
0& 0& 0& 0& 0& 0& 0& 0& 0& 0& i_{11}& i_{11}& 0& i_{14}
\end{array}\right).
$$
}

Similarly, since $x_2=\varphi_2(x_{\alpha_2}(1))$ commutes with
$h_{\alpha_2}(-1)$ and
$w_{2\alpha_1+\alpha_2}=w_1w_2w_1w_2^{-1}w_1^{-1}$, we have
 {\small
$$ x_2=\left(\begin{array}{cccccccccccccc}
j_1& j_2& 0& 0& j_5& j_6& 0& 0& j_9&j_{10}& j_{11}& j_{12}& 0& 0\\
k_1& k_1& 0& 0& k_5& k_6& 0& 0& k_9& k_{10}& k_{11}& k_{12}& 0& 0\\
0& 0& l_3& l_4& 0& 0& l_7& -l_7& 0& 0& 0& 0& l_{13}& -2l_{13}\\
0& 0& m_3& m_4& 0& 0& m_7& -m_7& 0& 0& 0& 0& m_{13}& -2m_{13}\\
-k_6& -k_5& 0& 0& k_2& k_1& 0& 0& -k_{12}& -k_{11}& k_{10}& k_9& 0& 0\\
-j_6& -j_5& 0& 0& j_2& j_1& 0& 0& -j_{12}& -j_{11}& j_{10}& j_9& 0& 0\\
0& 0& n_3& n_4& 0& 0& n_7& n_8& 0& 0& 0& 0& n_{13}& n_{14}\\
0& 0& -n_3& -n_4& 0& 0& n_8& n_7& 0& 0& 0& 0& n_{13}+n_{14}& -n_{14}\\
p_1& p_2& 0& 0& p_5& p_6& 0& 0& p_9& p_{10}& p_{11}& p_{12}& 0& 0\\
q_1& q_2& 0& 0& q_5& q_6& 0& 0& q_9& q_{10}& q_{11}& 0& 0& 0\\
-q_6& -q_5& 0& 0& q_2& q_1& 0& 0& 0& -q_{11}& q_{10}& q_9& 0& 0\\
-p_6& -p_5& 0& 0& p_2& p_1& 0& 0& -p_{12}& -p_{11}& p_{10}& p_9& 0& 0\\
0& 0& 0& 0& 0& 0& 0& s_7+s_8& s_7+s_8& 0& 0& 0& 2s_{13}+s_{14}& 0\\
0& 0& s_3& s_4& 0& 0& s_7& s_8& 0& 0& 0& 0& s_{13}& s_{14}
\end{array}\right).$$
}
 Finally, since $h_{\alpha_2}(2)$ commutes with
$h_{\alpha_1}(-1)$, $h_{\alpha_2}(-1)$, $w_{2\alpha_1+\alpha_2}$,
and also from the equality
$Con6:=(w_2h_{\alpha_2}(2)w_2^{-1}=h_{\alpha_2}(2)^{-1})$, we obtain

{\small
$$
d_2=\varphi_2(h_{\alpha_2}(2))=\left(\begin{array}{cccccccccccccc}
t_1& t_2& 0& 0& 0& 0& 0& 0& 0&0& t_{11}& t_{12}& 0& 0\\
u_1& u_2& 0& 0& 0& 0& 0& 0& 0& 0& u_{11}& u_{12}& 0& 0\\
0& 0& v_3& v_4& 0& 0& v_7& -v_7& 0& 0& 0& 0& 0& 0\\
0& 0& w_3& w_4& 0& 0& w_7& -w_7& 0& 0& 0& 0& 0& 0\\
0& 0& 0& 0& u_2& u_1& 0& 0& -u_{12}& -u_{11}& 0& 0& 0& 0\\
0& 0& 0& 0& t_2& t_1& 0& 0& -t_{12}& -t_{11}& 0& 0& 0& 0\\
0& 0& x_3& x_4& 0& 0& x_7& x_8& 0& 0& 0& 0& 0& 0\\
0& 0& -x_3& -x_4& 0& 0& x_8& x_7& 0& 0& 0& 0& 0& 0\\
0& 0& 0& 0& y_5& y_6& 0& 0& y_9& y_{10}& 0& 0& 0& 0\\
0& 0& 0& 0& z_5& z_6& 0& 0& z_9& z_{10}& 0& 0& 0& 0\\
-z_6& -z_5& 0& 0& 0& 0& 0& 0& 0& 0& z_{10}& z_9& 0& 0\\
-y_6& -y_5& 0& 0& 0& 0& 0& 0& 0& 0& y_{10}& y_9& 0& 0\\
0& 0& 0& 0& 0& 0& 0& 0& 0& 0& 0& 0& 1& 0\\
0& 0& 0& 0& 0& 0& 0& 0& 0& 0& 0& 0& 0& 1
\end{array}\right).$$
}

 Pos. $(14,12)$ of $Con7:=(x_1^2d_2=d_2x_1)$ gives
 $$
i_{11}\cdot \alpha=i_{11}((g_{11}+g_{12}+i_{14})(z_9+y_9)-1)=0.
$$
Since $\alpha\equiv 3\mod J$ and $3\in R^*$, we have $i_{11}=0$.
From the position $(14,14)$ of the same condition we obtain
$i_{14}(i_{14}-1)=0$, therefore, $i_{14}=1$.

Now we will use the condition
$$
Con8:=(w_2x_2w_2^{-1}x_1=x_1w_2x_2w_2^{-1}).
$$
Its position $(14,2)$ gives $s_{13}=0$,

Let us make the block-diagonal basis change that is identical on the
submodule, generated by all $\{ V_1,\dots, V_l\}$, and all $\{
v_i,v_{-i}\}$, where $\alpha_i$ is a long root, and with the matrix
$$
\begin{pmatrix}
\frac{a_{13}}{a_{13}^2-b_{13}^2}& \frac{-b_{13}}{a_{13}^2-b_{13}^2}\\
\frac{-b_{13}}{a_{13}^2-b_{13}^2}& \frac{a_{13}}{a_{13}^2-b_{13}^2}
\end{pmatrix}
$$
on all submodules, generated by $\{ v_i,v_{-i}\}$, were $\alpha_i$
is a short root. This basis change does not move $w_i$ for all $i$,
it is equivalent to unit modulo~$J$. At the same time we have
$a_{13}=-2$, $b_{13}=0$.

The position $(13,13)$ of
$$
Con9:=(h_{\alpha_2}(-1)x_1h_{\alpha_2}(-1)x_1=1)
$$
gives
$$
2h_1-2g_{13}h_{11}+h_{13}^2-1=0,
$$
and the position $(13,13)$ of $Con7$ gives $$
-2h_1+2g_{13}h_{11}+h_{13}^2-h_{13}=0.
$$
These two equalities imply $h_{13}=1$.

Now the positions $(2,12)$ and $(2,13)$ of $Con9$ give
$$
\begin{cases}
-b_1a_{11}+b_{11}(-b_2+g_{12}-g_{11})=0,\\
-b_12+b_{11}g_{13}=0.
\end{cases}
$$
This system modulo~$J$ is equivalent to
$$
\begin{cases}
0\cdot \overline b_1-2\cdot \overline b_{11}=0,\\
-2\cdot \overline b_1+ 0\cdot \overline b_{11}=0,
\end{cases}
$$
therefore $b_1=b_{11}=0$. The same condition (pos.~$(2,2)$) directly
implies $b_2=1$.

Let us again make the block-diagonal basis change, that is identical
on the submodule, generated by all $\{ V_1,\dots, V_l\}$ and all $\{
v_i,v_{-i}\}$, where $\alpha_i$ is a short root, and has the matrix
$$
\begin{pmatrix}
\frac{l_{13}}{l_{13}^2-m_{13}^2}& \frac{-m_{13}}{l_{13}^2-m_{13}^2}\\
\frac{-m_{13}}{l_{13}^2-m_{13}^2}& \frac{l_{13}}{l_{13}^2-m_{13}^2}
\end{pmatrix}
$$
on all submodules, generated by $\{ v_i,v_{-i}\}$, where $\alpha_i$
is a long root. This basis change does not move $w_i$ for all $i$,
and is equivalent to~$1$ modulo~$J$. At the same time we have
$l_{13}=1$, $m_{13}=0$.

Now we suppose that from the isomorphism $\varphi_2$ after the last
two basis changes we come to the isomorphism~$\varphi_3$.

From the position   $(13,2)$ of $Con9$ we obtain
$h_1a_2=-2h_{11}g_2$, therefore,  $h_1=\beta_1 h_{11}$,
$\beta_1\equiv 0\mod J$. Now from $(1,12)$ of  $Con9$
$-a_1a_{11}+a_{11}g_{12}-a_{11}g_{11}-2h_{11}$, consequently,
$a_{11}=\beta_2 h_{11}$, $\beta_2\equiv -1 \mod J$. Similarly, from
$(1,13)$ of $Con9$ it follows $a_1=1+\beta_3 h_{11}$, $\beta_3\equiv
0\mod J$, from $(12,1)$ of $Con9$ $g_{12}=\beta_4 h_{11}$, it
follows $\beta_4\equiv 0\mod J$, from $(12,12)$ of $Con9$ it follows
$g_{11}= 1+\beta_5 h_{11}$, $\beta_5\equiv 0\mod J$, from $(12,1)$
of $Con9$ it follows $g_1=beta_6h_{11}$, $\beta_6\equiv 0\mod J$.
Using the position $(2,1)$ of
$$
Con10:=(x_1w_1x_1w_1^{-1}=w_1),
$$
we obtain $a_2=-1+\beta_7h_{11}$, $\beta_7\equiv 0\mod J$. From
$(1,2)$ of $Con9$ it follows $h_2=1+\beta_8 h_{11}$, $\beta_8\equiv
0\mod J$. Using position $(1,2)$ of
$$
Con11:=( w_2x_2w_2^{-1}x_1=x_1w_2x_2w_2^{-1}), $$ we obtain $
j_2=\beta_9 h_{11}$, $\beta_9\equiv 0\mod J$. From $(13,12)$ of
$Con11$ $j_{11}=\beta_{10}h_{11}$, $\beta_{10}\equiv  0\mod J$, from
$(1,11)$ of $Con11$ $j_{12}=\beta_{11}h_{11}$, $\beta_{11}\equiv
0\mod J$, from $(1,1)$ of $Con7$ $u_1=\beta_{12}h_{11}$,
$\beta_{12}\equiv 0\mod J$, from $(1,12)$ of $Con7$
$u_{12}=\beta_{13}h_{11}$, $\beta_{13}\equiv 3/2\mod J$, from
$(13,2)$ of $Con7$ $u_2=1/2+\beta_{13}h_{11}$, $\beta_{13}\equiv
0\mod J$.

Using the position $(7,7)$ of $$ Con12:= (d_2
\varphi_2(x_{2\alpha_1+\alpha_2}(1))=\varphi_2(x_{2\alpha_1+\alpha_2}(1))d_2=d_2w_1w_2x_1w_2^{-1}w_1^{-1}-w_1w_2x_1w_2^{-1}w_1^{-1}d_2)
$$ we obtain $x_8=\beta_{14}h_{11}$, $\beta_{14}\equiv 0\mod J$,
from the position $(14,8)$ of $Con12$ it follows
$x_7=1+\beta_{15}h_{11}$, $\beta_{15}\equiv 0\mod J$, from $(1,11)$
of $Con7$ it follows $u_{11}=\beta_{15}h_{11}$, $\beta_{15}\equiv
0\mod J$.

Using the position $(1,12)$ of $$ Con13:=(w_2d_2w_2^{-1}d_2=1),$$ we
obtain $t_{12}=\beta_{16}h_{11}$, $\beta_{16}\equiv 0\mod J$, from
$(1,11)$ of $Con13$ it follows $t_{11}=\beta_{17}h_{11}$,
$\beta_{17}\equiv -3/2\mod J$, from $(1,5)$ of $Con13$ it follows
$t_2=\beta_{18}h_{11}$, $\beta_{18}\equiv 0\mod J$, from $(2,2)$ of
$Con13$ it follows $t_1=2+\beta_{19}h_{11}$, $\beta_{19}\equiv 0\mod
J$, from $(7,4)$ of $Con12$ it follows $x_4=\beta_{20}h_{11}$,
$\beta_{20}\equiv 0\mod J$, from $(7,3)$ of $Con12$ it follows
$x_3=\beta_{21}h_{11}$, $\beta_{21}\equiv 0\mod J$.

Position  $(13,14)$ of $$
Con14:=(x_2\varphi_2(x_{2\alpha_1+\alpha_2}(1))=\varphi_2(x_{2\alpha_1+\alpha_2}(1))x_2)
$$
gives $n_{14}=\beta_{22}h_{11}$, $\beta_{22}\equiv -2\mod J$, from $
(13,3)$ of $Con14$  it follows $n_3=\beta_{23}h_{11}$,
$\beta_{23}\equiv 0\mod J$, from $(13,4)$ of $Con14$ it follows
$n_4=\beta_{24}h_{11}$, $\beta_{24}\equiv 0\mod J$, from $(8,13)$ of
$Con14$ it follows $n_8=\beta_{25}h_{11}$, $\beta_{25}\equiv 0\mod
J$.

Finally, from $(7,4)$ of $Con14$ it follows $\beta_{26}h_{11}=0$,
where $\beta_{26}\in R^*$. Consequently, $h_{11}=0$.

Now position $(3,14)$ of $Con11$ gives $c_4=0$, position $(4,14)$ of
the same condition gives $d_4=1$, position $(13,2)$ gives
$s_{14}=j_1$.

Position $(3,13)$ of
$$
Con15:=(h_{\alpha_1}(-1) x_2h_{\alpha_1}(-1)x_2=1)
$$
gives $l_3=j_1$, position $(4,3)$ gives $m_3(m_4+1)=0 \Rightarrow
m_3=0$, position $(3,3)$ gives  $s_3=0$, $(4,4)$ gives $m_4=1$,
$(4,7)$ gives $m_7(1+n_7)=0\Rightarrow m_7=0$, $(3,13)$ gives
$j_1=1$, $(3,4)$ gives $s_4=-l_4$.

Position (5,3) of $Con11$ gives $e_4l_4=0\Rightarrow e_4=0$,
position $(4,13)$ of the same condition gives
$(d_7+d_8)n_{13}=0\Rightarrow n_{13}=0$, position $(8,8)$ of $Con15$
gives $n_7=1$, position $(7,13)$ of $Con11$ gives $f_9=0$, position
$(8,13)$ gives $e_9=0$, position $(13,13)$ of $Con14$ gives
$s_8=-s_7$, position $(14,13)$ gives $-l_4(-2g_{13}-g_{14})=s_7$,
and position $(3,13)$ gives $l_4(-2g_{13}-g_{14})=l_7$, therefore
$s_7=-l_7$. Position $(13,5)$ of $Con11$ gives $j_6=0$, $(13,6)$
gives $j_5=0$, $(13,9)$ gives $j_{10}=0$, $(13,10)$ gives $j_9=0$.

Positions $(10,13)$ and $(10,14)$ of

\begin{multline*}
 Con16:=
(\varphi_3(x_{3\alpha_1+\alpha_2}(1)x_{\alpha_2}(1)x_{3\alpha_1+2\alpha_2}(1))
=\varphi_3(x_{\alpha_2}(1)x_{3\alpha_1+\alpha_2}(1))):=\\
=(w_1x_2w_1^{-1}x_2w_2w_1x_2w_2^{-1}w_2^{-1}=w_1x_2w_1^{-1}x_2)
\end{multline*}
 give $q_9=q_{11}=0$,  $(7,13)$ --- $k_{12}=0$,
 $(9,13)$ --- $p_9=1$, $(2,13)$ --- $k_9=0$, $(2,14)$ ---
$k_{11}=0$, $(5,14)$ --- $k_{10}=0$, $(14,12)$ --- $q_{12}=1$,
$(9,14)$ --- $p_{11}=0$, $(4,14)$ --- $p_{10}q_{12}=0\Rightarrow
p_{10}=0$, $(12,14)$ --- $p_{12}=0$, $(11,14)$ --- $q_{10}=1$,
$(3,10)$ --- $l_4=-1$, $(5,1)$ --- $k_6(k_2-1)=0\Rightarrow k_2=1$,
$(5,2)$ --- $k_6k_1=0\Rightarrow k_1=0$, $(4,1)$ и $(4,2)$ ---
$p_5=p_6=0$, $(13,5)$ --- $q_5=0$, $(4,6)$ --- $q_1k_6=0\Rightarrow
q_1=0$, $(1,6)$ --- $k_5k_6=0\Rightarrow k_5=0$, $(5,8)$ ---
$k_6(k_6-1)=0\Rightarrow k_6=1$, $(12,8)$ --- $p_2=0$, $(13,6)$ ---
$q_6=0$.

From $Con11$ it now follows that $e_3=3$ (pos.~(1,3)), $f_4=0$
(pos.~(6,3)), $d_9=0$ (pos.~(9,3)), $c_9=0$ (pos.(10,3)), $e_6=0$
(pos.~(1,6)), $e_{10}=0$ (pos.~(1,10)), $e_5=1$ (pos.~(1,5)).

From $Con14$ it follows $f_7=0$ (pos.~(1,1)), $f_3=0$ (pos.~(1,12)).
Position $(1,3)$ of $Con7$ gives $w_3=0$, positions (3,4) and (4,3)
of $Con9$ give  $v_4=0$.

From $(12,2)$ of $Con9$ we obtain $g_{13}=2g_2$, from $(8,3)$ we
obtain $e_8=-c_{10}$, from $(12,14)$ of $Con10$ we obtain
$g_{14}=-3g_2$, from $(6,9)$ we have $f_{10}(d_3+f_5)=0\Rightarrow
f_5=-d_3$, from $(10,6)$ of $Con15$ --- $q_2=-p_1$, from $(1,7)$ of
$Con11$ we have $e_7=3l_7$, from $(1,8)$ we obtain $c_{10}=3l_7$,
from $(12,3)$ we have $g_2=l_7$, from $(11,3)$ it follows
$p_1=-l_7$.

Now from
$$ Con17:=(d_2x_2^4=x_2d_2)
$$
$v_4=16v_3$ (pos.~(3,4)), $v_3=1/4$ (pos.~(3,13)), $w_7=-3l_7$
(pos.~(3,8)), $y_{10}=0$ (pos.(9,12)), $y_9=4z_{10}$ (pos.~(10,12)),
$z_{10}=x_{10}$ (pos.~(11,9)). From (11,12) and (12,12) of  $Con6$
we have $z_9=0$, $z_{10}=1/2$, from (9,5) and (9,6)  $z_6=-y_5$,
$y_6=-4z_5$, from (4,8)  $v_7=-3/4l_7$. Again from $Con17$ it
follows $z_5=0$ (pos.~(11,6)), $y_5=-3/2l_7$ (pos.~(10,2)).  From
(3,2) of $Con11$ we obtain $c_6=-3l_7^2$, from (4,3) we have
$c_3=1+l_7f_{10}$, from (4,2) we have $d_6=-1-d_{10}l_7$, from (6,2)
we have $f_6=1-l_7f_{10}$, from (4,8) we have $c_8=l_7d_3$, from
(12,6) $c_7=l_7f_{10}$, from (9,12)  $d_3=-l_7f_{10}$. From (5,7) of
$Con7$ we obtain $3l_7(l_7f_{10}-f_{10})=0$. Since $f_{10}\in R^*$,
we have $l_7=0$. From $Con11$ it follows $c_5=0$, from (4,7) of
$Con9$ it follows $d_7=0$, from (4,3) it follows $d_5=0$, from (3,9)
of $Con10$ it follows $d_{10}=1$, from $(5,9)$ we have $f_{10}=-3$,
from (3,7) we have $d_8=1$, from (3,5) we have $f_8=-2$.

Now, finally, we see that $x_1=x_{\alpha_1}(1)$,
$x_2=x_{\alpha_2}(1)$, $d_2=h_{\alpha_2}(2)$. It is easy to check
that $\varphi_3(h_{\alpha_1}(t))=h_{\alpha_1}(s)$ and
$\varphi_3(h_{\alpha_2}(t))=h_{\alpha_2}(s)$ for some $s\in R^*$.
Since all roots of the same length are conjugate up to the action
of~$W$, then $\varphi_3(x_{\alpha_i}(1))=x_{\alpha_i}(1)$ and
$\varphi_3(h_{\alpha_i}(t))=h_{\alpha_i}(s)$ for some~$s\in R^*$.

\section{Proof of Theorem 1.}

Now we have stated that for both root systems under consideration
$\varphi_3(x_{\alpha_i}(1))=x_{\alpha_i}(1))$,
$\varphi_3(h_{\alpha_i}(t))=h_{\alpha_i}(s)$, $i=\pm 1,\dots,\pm m$,
$t,s\in R^*$.

For any long root $\alpha_j$ there exists a root $\alpha_k$ such
that
$h_{\alpha_k}(t)x_{\alpha_j}(1)h_{\alpha_k}(t)^{-1}=x_{\alpha_j}(t)$.
Therefore, $\varphi_3(x_{\alpha_j}(t)) =x_{\alpha_j}(s)$. From these
conditions and commutator conditions it follows that
$\varphi_3(x_{\alpha_j}(t))=x_{\alpha_j}(s)$ for all $\alpha_j\in
\Phi$.

Let us denote the mapping $t\mapsto s$ by $\rho: R^* \to R^*$. If
$t\notin R^*$, then $t\in J$, i.\,e., $t=1+t_1$, where $t_1\in R^*$.
Then
$\varphi_3(x_\alpha(t))=\varphi_3(x_\alpha(1)x_\alpha(t_1))=x_\alpha(1)x_\alpha(\rho(t_1))=
x_\alpha(1+\rho(t_1))$, $\alpha\in \Phi$. Therefore, if we extend
the mapping $\rho$ to the whole ring~$R$ (with the formula
$\rho(t):=1+\rho(t-1)$ for $t\in J$), then we obtain
$\varphi_3(x_\alpha(t))=x_\alpha(\rho(t))$ for all $t\in R$,
$\alpha\in \Phi$.
 It is clear that $\rho$ is injective, additive, multiplicative on invertible elements. Since every element of the ring~$R$ is a sum of two invertible elements,
we have that $\rho$ is also multiplicative on uninvertible elements
of the ring, i.e., is an isomorphism from~$R$ to some its
subring~$R'$.
 Note that in this situation $C E(\Phi,R)
C^{-1}=E(\Phi,R')$ for some matrix $C\in GL(V)$. Let us show that
$R'=R$.

\begin{lemma}\label{porozhd}
Elementary Chevalley group $E_{ad}(R,\Phi)$ generates  $M_n(R)$ as a
ring.
\end{lemma}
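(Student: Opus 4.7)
The plan is to extract the image of the Lie algebra $\pi(\mathcal{L})$ inside the subring $S$ generated by $E_{ad}(R,\Phi)$ and then close it up to the full matrix ring via a density-plus-Nakayama argument. Let $S \subseteq M_n(R)$ denote the $R$-subalgebra generated by $E_{ad}(R,\Phi)$ (with $n=10$ for $B_2$ and $n=14$ for $G_2$). For each root $\alpha\in\Phi$ set $A_\alpha:=\pi(x_\alpha)=\ad x_\alpha$; this is a nilpotent integer matrix of nilpotency index at most $3$ for $\Phi=B_2$ and at most $4$ for $\Phi=G_2$ (bounded by the longest $\alpha$-chain of roots in $\Phi\cup\{0\}$).

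First I would use polynomial interpolation to extract the $A_\alpha$ themselves. For any fixed $r\in R$, the map $t\mapsto x_\alpha(rt)=\sum_{k=0}^{d-1} t^k (r^k A_\alpha^k/k!)$ is a matrix-valued polynomial in the integer parameter $t$ of degree $d-1\le 3$. Evaluating at $t=0,1,\dots,d-1$ gives $d$ elements of $E_{ad}(R,\Phi)$; inverting the associated Vandermonde matrix, whose determinant $1!\,2!\cdots (d-1)!$ divides $12$ and is therefore invertible in $R$ by the hypothesis $1/2,1/3\in R$, expresses each coefficient $r^k A_\alpha^k/k!$ as an $R$-linear combination of these elements. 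Specializing to $k=1$ yields $rA_\alpha\in S$ for every $r\in R$ and every $\alpha\in\Phi$.

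Next I would use the Chevalley commutator relations: since $[A_\alpha,A_{-\alpha}]=\pi(h_\alpha)$ and $[A_\alpha,A_\beta]=N_{\alpha,\beta}A_{\alpha+\beta}$ whenever $\alpha+\beta\in\Phi$, the subring $S$ contains $\pi(\mathcal{L}_{\mathbb Z})$ and hence also $R\cdot\pi(\mathcal{L}_{\mathbb Z})$. Closing under multiplication, $S$ contains the associative $R$-subalgebra $\mathcal{A}$ generated by the image of $\pi$.

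Finally I would finish by density plus Nakayama. Reducing modulo the maximal ideal $\mathfrak{m}$ of $R$, the residue field $k$ has characteristic different from $2$ and (for $G_2$) from $3$. Under these restrictions the adjoint representation of the simple Lie algebra of type $B_2$ or $G_2$ is irreducible over $k$, so by the Jacobson--Burnside density theorem the associative $k$-algebra generated by its image is all of $M_n(k)$. Thus $M_n(R)/\mathcal{A}$ is a finitely generated $R$-module that vanishes modulo $\mathfrak{m}$, and Nakayama's lemma gives $\mathcal{A}=M_n(R)$, whence $S=M_n(R)$. The main obstacle sits in this last step: one must know that the adjoint representation stays irreducible in the residue characteristic, which is precisely where the hypothesis $1/2,1/3\in R$ enters essentially, since $2$ and $3$ are the torsion primes of the root systems $B_2$ and $G_2$.
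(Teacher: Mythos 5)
Your route is genuinely different from the paper's: the paper proves this lemma by direct computation, starting from $(x_{e_1+e_2}(1)-1)^2=-2E_{5,6}$ and manufacturing every matrix unit explicitly by multiplying with torus and Weyl-group elements, whereas you extract the Lie algebra $\pi(\mathcal L)$ by interpolation and then invoke irreducibility of the adjoint representation over the residue field together with Nakayama. That skeleton is attractive and would generalize beyond rank $2$, but as written it has one essential gap and one smaller one. The smaller one first: Burnside's theorem over the residue field $k$, which need not be algebraically closed, only yields $M_m(D)$ for a division algebra $D$; you need the adjoint representation to be \emph{absolutely} irreducible (equivalently, that its commutant is $k$), which does hold for the split simple Lie algebras of types $B_2$ ($\mathrm{char}\,k\ne 2$) and $G_2$ ($\mathrm{char}\,k\ne 2,3$) but should be stated and justified.

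The essential gap is the distinction between ``generates $M_n(R)$ as a ring'' and ``generates $M_n(R)$ as an $R$-algebra.'' You silently replace the first by the second: the inverse Vandermonde has entries in $\mathbb Z[1/6]$, which are elements of $R$ but not visibly elements of the subring generated by the group, and your Nakayama step treats $M_n(R)/\mathcal A$ as an $R$-module, which presupposes that $\mathcal A$ is an $R$-submodule. This is not a cosmetic point, because Lemma~\ref{Tema} uses precisely the subring version: conjugation by $C\in GL_n(R)$ is $R$-linear, so the $R$-subalgebra generated by $E(\Phi,R')$ is already all of $M_n(R)$ (it contains the matrix units), and the identity $CM_n(R)C^{-1}=M_n(R)$ yields no contradiction; only the \emph{subring} generated by $E(\Phi,R')$ equals $M_n(R')$ and thereby detects $R'$. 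The gap is repairable with integer coefficients only: from $x_\alpha(t)+x_\alpha(-t)-2=t^2A_\alpha^2$, $(x_\alpha(t)-1)^3=t^3A_\alpha^3$ and $x_\alpha(t)-x_\alpha(-t)=2tA_\alpha+\frac{t^3}{3}A_\alpha^3$, together with $(s+1)^3+(s-1)^3-2s^3=6s$ and $6\in R^*$, one gets $RA_\alpha$ inside the honest subring generated by $E_{ad}(R,\Phi)$; after that your algebra $\mathcal A$ lies in that subring and the density-plus-Nakayama conclusion applies to it. With those two repairs your argument is a legitimate, more conceptual alternative to the paper's matrix-unit computation.
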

\begin{proof}
Let us consider the case of the root system $B_2$, since the case
$G_2$ is completely similar. The matrix $(x_{e_1+e_2}(1)-1)^2$ is
$-2E_{5,6}$ ($E_{ij}$ is a matrix unit). Multiplying it to some
appropriate diagonal matrix, we obtain an arbitrary matrix of the
form $\alpha\cdot e_{12}$ (since $-2\in R^*$ and invertible elements
of~$R$ generate~$R$). Then $w_{e_1}\alpha E_{5,6}=\alpha E_{8,5}$,
$w_{e_1}\alpha E_{5,6} w_{e_1}=\alpha E_{8,7}$, $w_{e_2}\alpha
E_{5,6}=\alpha E_{7,6}$, $w_{e_2}\alpha E_{5,6}w_{e_1}=\alpha
E_{7,7}$, $w_{e_2} \alpha E_{5,6} w_{e_2}=\alpha E_{7,8}$, $w_{e_1}
\alpha E_{5,6} w_{e_2}=\alpha E_{8,8}$, $w_{e_1+e_2} \alpha
E_{5,6}=\alpha E_{6,6}$. These matrices generate a subring of the
matrix ring, generated by $E_{i,j}$, $4\le i,j\le 8$. Similarly,
with the help of $(x_{e_1}(1)-1)^2$ we can obtain a subring,
generated by $E_{i,j}$, $1\le i,j\le 4$. With these matrix units and
elements   $x_\alpha (1)$ we can generate the subring $M_{8}(R)$.
Now let us subtract from $x_{e_2}(1)-1$ appropriate matrix units,
and we obtain the matrix $E_{10, 4}-2 E_{3,9}+ E_{3,10}$.
Multiplying it (from the right side) to $E_{4,i}$, $1\le i\le 8$, we
obtain all $E_{10, i}$, $1\le i\le 2m$. With the help of the Weil
group we have all $E_{i,j}$, $8< i\le 9$, $1\le j\le 8$. Now we have
the matrix $-2E_{3,9}+E_{3,10}$. Multiplying it (from the left side)
to $E_{2m+1,1}$, we have $E_{2m+1,2m+1}$. With the help of last two
matrices we have $E_{3,9}$, therefore
 $E_{i,j}$, $1\le i\le 8$, $8< j\le 9$.
Thus we obtain all matrix units, and so the whole matrix ring
$M_n(R)$.
\end{proof}

\begin{lemma}\label{Tema}
If for some $C\in GL(V)$ we have $C E(\Phi,R) C^{-1}= E(R',\Phi)$,
where $R'$ is a subring in~$R$, then $R'=R$.
\end{lemma}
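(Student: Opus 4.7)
The plan is to apply Lemma~\ref{porozhd} to both $R$ and $R'$, push the ring structure through conjugation by~$C$, and then exploit the fact that $C$ has entries in~$R$.

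First, I would note that $R'$, being isomorphic to $R$ via the ring map~$\rho$ constructed above, is itself a commutative local ring containing $1/2$ (and $1/3$ in case $\Phi=G_2$). Hence Lemma~\ref{porozhd} applies to $R'$ word for word, yielding that $E_{ad}(\Phi,R')$ generates the whole matrix ring $M_n(R')$ as a ring. Lemma~\ref{porozhd} applied to the base ring $R$ itself gives that $E_{ad}(\Phi,R)$ generates $M_n(R)$.

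Next, conjugation by $C$ is a ring automorphism of the ambient matrix ring, and by hypothesis it carries $E_{ad}(\Phi,R)$ onto $E_{ad}(\Phi,R')$. Since the subring generated by a set is preserved by any ring automorphism, combining with the previous paragraph I obtain
\[
C\,M_n(R)\,C^{-1} \;=\; M_n(R').
\]

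The decisive step then uses that $C\in GL(V)=GL_n(R)$: both $C$ and $C^{-1}$ have entries in~$R$, so $C\,M_n(R)\,C^{-1}\subseteq M_n(R)$, and by symmetry (applied to $C^{-1}$) the inclusion is an equality, $C\,M_n(R)\,C^{-1}=M_n(R)$. Comparing with the previous display gives $M_n(R)=M_n(R')$, and reading off any single matrix entry immediately yields $R=R'$.

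I do not expect a serious obstacle. The only point that deserves attention is checking that $C$ genuinely lies in $GL_n(R)$ rather than in some enlargement, which is clear in our setting because $C$ is a finite product of matrices from $GL_n(R)$ (the matrices $g$, $g_1$, and the subsequent block-diagonal adjustments) assembled in the previous sections.
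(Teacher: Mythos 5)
Your proposal is correct and follows essentially the same route as the paper: apply Lemma~\ref{porozhd} to both $E(\Phi,R)$ and $E(\Phi,R')$ to get $C\,M_n(R)\,C^{-1}=M_n(R')$, then derive a contradiction (or, in your phrasing, an equality) from $C\in GL_n(R)$. You merely spell out two points the paper leaves implicit --- that Lemma~\ref{porozhd} applies to $R'$ because $R'\cong R$, and why $C\in GL_n(R)$ forces $C\,M_n(R)\,C^{-1}=M_n(R)$ --- which is a welcome clarification but not a different argument.
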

\begin{proof}
Suppose that $R'$ is a proper subring in~$R$.

Then $C M_n(R) C^{-1} =M_n (R')$, since the group $E(\Phi,R)$
generates the ring $M_n(R)$, and the group $E(\Phi,R')=CE(\Phi,R)
C^{-1}$ generates the ring $M_n(R')$. It is impossible, since $C\in
GL_n(R)$.
\end{proof}

Consequently, we have proved that $\rho$ is an automorphism of the
ring~$R$. So the composition of the initial isomorphism~$\varphi'$
and some basis change with a matrix $C\in GL_n(R)$ (that maps
$E(\Phi,R)$ onto itself), is a ring automorphism~$\rho$. Thus
$\varphi'' = i_{C^{-1}} \circ \rho$.

Therefore, Theorem 1 is proved.

\section{Proof of Theorem 2.}

{\bf Proof of Theorem 2.} Suppose that we have a Chevalley group
$G_{ad}(\Phi,R)$ and its automorphism~$\varphi$. Since the
elementary subgroup $E_{ad}(\Phi,R)$ is the commutant of
$G_{ad}(\Phi,R)$, then it is mapped onto itself under the action
of~$\varphi$. On the elementary subgroup $\varphi$ is a composition
of standard automorphisms: $\varphi=i_C \circ \rho$. Consider a
mapping $\varphi'= \rho^{-1} \circ i_C^{-1}\circ \varphi$. It is an
isomorphism from the Chevalley group $G_{ad}(\Phi,R)$ onto some
subgroup~$G\subset GL_n(R)$, that it identical on the elementary
subgroup. We know that $G_{ad}(\Phi,R)=E_{ad}(\Phi,R)
T_{ad}(\Phi,R)$, $T_{ad}(\Phi,R)$ consists of such elements
$h_{\chi}$, that $\chi: \Phi\to R^*$ is a homomorphism, $h_{\chi}
x_\alpha(t) h_{\chi}^{-1}= x_{\alpha}(\chi(\alpha)\cdot t)$. Every
$h_\chi$ commutes with all $h_\alpha (t)$, $t\in R^*$.

Consider a matrix $A=\varphi'(h_\chi)$. Since  $A$ commutes with all
$h_\alpha (t)$, $\alpha\in \Phi$, $t\in R^*$, it has the form
$$
A=\begin{pmatrix}
D& 0\\
0& C
\end{pmatrix},
$$
where $D$ is a diagonal matrix  $(2m)\times (2m)$, $C$ is some
matrix $l\times l$. For all $\alpha \in \Phi$ and $t\in R$ we have
the condition
$$
A x_\alpha (t) A^{-1}= x_\alpha (\chi(\alpha)\cdot t).
$$
By direct calculations  (as it was done in the previous sections) we
see that this condition implies $A= a_{\chi} h_\chi$, where
$a_\chi\in R^*$. Let $h_{t_1,\dots,t_l}$ be a homomorphism from the
root lattice to $R^*$ such that every simple root  $\alpha_i$ is
mapped to $t_i$. Then from $(h_{t,1,\dots,1})^2\cdot
(h_{1,t^{-1},1,\dots,1})=h_{\alpha_1}(t)$ it follows that for all
$\chi$ $a_\chi^3=1$. Since $\varphi'(h_\chi)\in SL_n(R)$, we have
$a_\chi^n=1$. So (since $10,14$, that are dimensions of adjoint
representations of Chevalley groups $B_2$ and $G_2$, do not divided
to~$3$), we have $a_\chi=1$ for all~$\chi$, therefore, $\varphi'$ is
identical. Theorem is proved.

\begin{corollary}
Any automorphism of an (elementary) adjoint Chevalley group of types
$B_2$ (or $G_2$) over local commutative ring with $1/2$ ($1/6$)
induces an automorphism of the matrix ring $M_n(R)$, where $n$ is a
dimension of adjoint representation.
\end{corollary}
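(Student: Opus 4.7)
The plan is to deduce the corollary directly from Theorems 1 and 2 together with Lemma~\ref{porozhd}. By those theorems, an automorphism $\varphi$ of $E_{ad}(\Phi,R)$ (resp.\ $G_{ad}(\Phi,R)$) is standard. As recorded in the definition of central automorphisms, any such automorphism is trivial on the commutant, hence on $E_{ad}(\Phi,R)$; and the proof of Theorem 2 shows that in the adjoint cases $B_2$ and $G_2$ under consideration the torus contribution is also trivial (the constant $a_\chi$ was forced to equal $1$). Thus in both cases I may write $\varphi = i_C \circ \rho$ for some $C \in GL_n(R)$ normalizing the group and some ring automorphism $\rho$ of $R$.

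Next, I would define a candidate extension $\widetilde\varphi \colon M_n(R) \to M_n(R)$ by
\[
\widetilde\varphi(X) \;=\; C\, \rho(X)\, C^{-1},
\]
where $\rho(X)$ denotes entrywise application of $\rho$ to the matrix $X$. Both operations --- conjugation by an element of $GL_n(R)$, and the entrywise action of a ring automorphism of $R$ --- are ring automorphisms of $M_n(R)$, and they commute; their composition is therefore a ring automorphism of $M_n(R)$. By construction $\widetilde\varphi$ agrees with $\varphi$ on every generator $x_\alpha(t)$, and hence on all of $E_{ad}(\Phi,R)$ (and on $G_{ad}(\Phi,R)$ in the setting of Theorem~2, using $G_{ad}(\Phi,R) = E_{ad}(\Phi,R)T_{ad}(\Phi,R)$ together with the action of $\widetilde\varphi$ on diagonal matrices).

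Uniqueness of the extension is then supplied by Lemma~\ref{porozhd}: since $E_{ad}(\Phi,R)$ generates $M_n(R)$ as a ring, any ring endomorphism of $M_n(R)$ is determined by its values on $E_{ad}(\Phi,R)$. Hence $\widetilde\varphi$ is the unique ring automorphism of $M_n(R)$ restricting to $\varphi$, which is exactly the statement of the corollary. There is no real obstacle here, as the substantive content was already extracted in Theorems 1 and~2; the corollary is a formal consequence of the standard decomposition combined with the ring-generation lemma.
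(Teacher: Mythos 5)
Your argument is correct and is essentially the one the paper intends (the corollary is stated there without an explicit proof): Theorems 1 and 2 give $\varphi = i_C\circ\rho$ with no central part, the map $X\mapsto C\,\rho(X)\,C^{-1}$ is a ring automorphism of $M_n(R)$ agreeing with $\varphi$ on the group, and Lemma~\ref{porozhd} supplies uniqueness of the extension. One small quibble: conjugation by $C$ and entrywise application of $\rho$ do \emph{not} commute in general (only when $\rho(C)=C$), but this remark is not load-bearing, since the composition of two ring automorphisms of $M_n(R)$ is a ring automorphism in any case.
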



\begin{thebibliography}{99}
\bibitem{ravnyekorni} Bunina E.I. Automorphisms of adjoint Chevalley groups of types
 $A_l, D_l, E_l$ over local rings. Russian Mathematical Surveys,
  62(5), 2007, 143--144.
 to appear.

\bibitem{Stb1} Steinberg R., Automorphisms of finite linear groups,
Canad. J. Math., 121, 1960, 606--615.

\bibitem{H} Humphreys J.\,F., On the automorphisms of infinite Chevalley groups,
Canad. J. Math., 21, 1969, 908-911.

\bibitem{Su} Suzuki K., On the automorphisms of Chevalley groups over $p$-adic
integer rings, Kumamoto J. Sci. (Math.), 16(1), 1984, 39--47.

\bibitem{Abe_OSN} Abe E. Automorphisms of Chevalley groups over commutative
rings. Algebra and Analysis, 5(2), 1993, 74--90.


\bibitem{D} Diedonne J., On the automorphisms of classical groups, Mem. Amer. Math. Soc.,
1951, 2.

\bibitem{HR} Hua L.K., Reiner I., Automorphisms of unimodular groups,
Trans. Amer. Math. Soc., 71, 1951, 331--348.

\bibitem{O'M2} O'Meara O.T., The automorphisms of linear groups over any integral domain,
J. reine angew. Math., 223, 1966, 56--100.

\bibitem{Mc} McDonald B.R., Automorphisms of $GL_n(R)$., Trans. Amer. Math. Soc., 215, 1976, 145--159;
95, `978, 155--171.

\bibitem{GolMikh1} Golubchik I.Z., Mikhalev A.V. Isomorphisms
of the general linear group over associative ring. Vestnik MSU, ser.
math., 1983, 3, 61--72.

\bibitem{Petechuk1} Petechuk V.M. Automorphisms of groups $SL_n$, $GL_n$ over
some local rings. Mathematical Notes, 28(2), 1980, 187--206.

\bibitem{Hamfris} Humphreys J.E. Introduction to Lie algebras and representation
theory. Springer--Verlag New York, 1978.

\bibitem{Burbaki} Bourbaki N. Groupes et Alg\'ebres de Lie. Hermann, 1968.

\bibitem{Steinberg}
Steinberg R. Lectures on Chevalley groups, Yale University, 1967.

\bibitem{Chevalley} Chevalley C. Certain schemas des groupes semi-simples.
Sem. Bourbaki,  1960--1961, 219, 1--16.

\bibitem{Artem_dis} Golubkov A.Ju. The prime radical of classical groups over associative rings, ph.d. thesis.
Moscow, 2001.

\bibitem{VavPlotk1}  Vavilov N.A., Plotkin E.B. Chevalley groups over commutative rings. I.
Elementary calculations. Acta Appl. Math. 45 (1996), 73--113.

\bibitem{M} Matsumoto H., \emph{Sur les sous-groupes arithm\'etiques des groupes
semi-simples deploy\'es}, Ann. Sci. Ecole Norm. Sup., 4eme ser. 2
(1969), 1--62.

\bibitem{Abe1} Abe E., \emph{Chevalley groups over local rings}, Tohoku Math. J. 21
(1969), N3, 474--494

\bibitem{St3} Stein M.R. \emph{Surjective stability in dimension $0$ for $K_2$
and related functors}, Trans. Amer. Soc., 1973, 178(1), 165--191.


\bibitem{AS} Abe E., Suzuki K., \emph{On normal subgroups of Chevalley groups
over commutative rings}, Tohoku Math. J. 28 (1976), N1, 185--198

\bibitem{Cn} Cohn P., \emph{On the structure of the $GL_2$ of a ring},
Publ. Math. Inst. Hautes Et. Sci. (1966), N30, 365--413.

\bibitem{Sw} Swan R., \emph{Generators and relations for certain special linear groups},
Adv. Math. 6 (1971), 1--77.

\bibitem{Su1} Suslin A.A., \emph{On a theorem of Cohn}, J. Sov. Math. 17 (1981), N2,
1801--1803.

\bibitem{Petechuk2} Petechuk V.M. Automorphisms of groups
 $SL_3(K)$, $GL_3(K)$.
Mathematical Notes, 31(5), 1982, 657--668.

\end{thebibliography}
\end{document}